\DeclarePairedDelimiter{\abs}{\lvert}{\rvert}
\numberwithin{equation}{section}
\numberwithin{figure}{section}
\theoremstyle{plain}
\newtheorem{thm}{\protect\theoremname}[section]
\theoremstyle{plain}
\newtheorem{cor}[thm]{\protect\corollaryname}
\theoremstyle{plain}
\newtheorem{lem}[thm]{\protect\lemmaname}
\theoremstyle{definition}
\newtheorem{defn}[thm]{\protect\definitionname}
\theoremstyle{plain}
\newtheorem{prop}[thm]{\protect\propositionname}
\newcommand{\lyxaddress}[1]{
	\par {\raggedright #1
	\vspace{1.4em}
	\noindent\par}
}
\theoremstyle{plain}
\newtheoremstyle{boldremark}
    {\dimexpr\topsep/2\relax} 
    {\dimexpr\topsep/2\relax} 
    {}          
    {}          
    {\bfseries} 
    {.}         
    {.5em}      
    {}          
\theoremstyle{boldremark}
    \rule{\linewidth}{0.5ex}\newline%
\providecommand{\corollaryname}{Corollary}
\providecommand{\definitionname}{Definition}
\providecommand{\lemmaname}{Lemma}
\providecommand{\propositionname}{Proposition}
\providecommand{\theoremname}{Theorem}
\begin{document}

\begin{singlespace}

\title{\noindent \textbf{On widely degenerate \textit{p}-Laplace equations with symmetric data }}
\end{singlespace}
\begin{singlespace}

\author{\noindent Stefania Russo}
\end{singlespace}
\begin{singlespace}

\date{\noindent }
\end{singlespace}
\maketitle
\begin{abstract}
\begin{singlespace}
We consider the Dirichlet problems 
\begin{equation*}\label{1}
    \begin{cases}
   - \mathrm{div} \Bigg( \,\Big( \abs{\nabla u_{p}} -1 \Big)_{+}^{p-1} \displaystyle{ \frac{\nabla u_{p}}{\abs{\nabla u_{p}}} } \Bigg) = f    \qquad \text{                    in  } B_R \qquad\\
       u_{p}=0 \, \hspace{14em} \text{                                on  } \partial{B_R},
      \end{cases}
      \end{equation*}
where $p > 1$ and $B_R \subseteq \mathbb{R}^N, \, N\geq 2$, is the open ball centered at the origin with radius $R>0$ .
\\
Through a well-known result by Talenti \cite{Tal}, we explicitly express the gradient of the solution $u_p$ outside the set $\{  \abs{\nabla u_p}\leq 1\}$, if the datum $f$ is a non-negative integrable radially decreasing function. This allows us to establish some sharp higher regularity results for the weak solutions, assuming that the datum $f$ belongs to a suitable Lorentz space, i.e. under a weaker assumption on the datum with respect to the available literature.  Moreover we analyze the behaviour of $u_p$ as $p \to 1^+$.
\end{singlespace}
\end{abstract}

\begin{singlespace}
\noindent \textbf{Mathematics Subject Classification:} 35B65; 35J70; 35J75 ; 49K20.
\end{singlespace}

\begin{singlespace}
\noindent \textbf{Keywords:} 
Widely degenerate
elliptic equations; Higher regularity; Lorentz spaces.
\end{singlespace}

\begin{singlespace}

\section{Introduction}
\end{singlespace}
Let us fix a ball $B_R \subset \mathbb{R}^N \text{ with } \,N \geq 2$ and consider the following family of Dirichlet problems 
\begin{equation}\label{1}
    \begin{cases}
   - \mathrm{div} \Bigg( \,\Big( \abs{\nabla u_{p}} -1 \Big)_{+}^{p-1} \displaystyle{ \frac{\nabla u_{p}}{\abs{\nabla u_{p}}} } \Bigg) = f    \qquad \text{                    in  } B_R \qquad\\
       u_{p}=0 \, \hspace{14em} \text{                                on  } \partial{B_R},
      \end{cases}
      \end{equation}
where $p > 1$ and $( \,\, \cdot \,\, )_+$ stands for the positive part.\\
The main feature of the equation in the Dirichlet problem at \eqref{1} is that it is widely degenerate, i.e. it behaves as the \textit{p}-Laplace equation only for large values of the modulus of the gradient of the solution and therefore fits into the wider context of the asymptotically regular problems (see [12-17],
\cite{I9,I10}).\\
 This kind of equations attracted a great interest in the last few years since they naturally arise in optimal transport problems with congestion effects (for a detailed explanation of this connection, we refer to \cite{I12,Bra}).\\ 
When dealing with non-degenerate \textit{p}-Laplace equations, the qualitative properties of the solutions can be obtained by means of the so-called symmetrization technique. More precisely, starting from the pioneering paper by Talenti (\cite{Tal}), it has been clear that the properties of the weak solutions to different kind of partial differential equations  can be derived by a comparison with the corresponding symmetrized problem. For an exhaustive list of applications of these techniques and references on this subject, we refer to  \cite{Barbato} (see, for example,  \cite{Aniso} for anisotropic elliptic operators, \cite{Parabolic} for the parabolic case, \cite{High1,High2} for higher-order operators). \\
A first important step in this comparison procedure is to analyze the qualitative properties of the solution to a problem with suitable symmetries.\\
As far as we know, such technique hasn't been exploited yet in case of widely degenerate problems and the aim of this paper is to give a first contribution by analyzing the qualitative properties of the weak solutions to \eqref{1} in case the datum $f$ is a non-negative radially decreasing function.\\
Although the celebrated result of Talenti can be applied to a wide class of elliptic problems, it cannot be directly applied to the equation in \eqref{1}. Indeed, the expression of the solution to a problem with a symmetric right-hand side of the form 
    $$ - \mathrm{div}\Big( a \left(  \nabla u_p \right)  \Big) = \, f,$$
where $ A(\abs{\xi}) \leq \langle a(\xi), \xi \rangle$, heavily relies on the ability to invert the function $\frac{A(r)}{r}$, which in our case is reduced to $(r-1)^{p-1}_+$, clearly invertible only for $r \geq 1$. Hence, the application of Talenti's Theorem, requires the introduction of a family of approximating problems that are uniformly elliptic.\\
Indeed, with this argument and a limiting procedure, we are able to give the explicit expression of the gradient of the solution to \eqref{1} a.e. outside the set $\{  \abs{\nabla u_p}\leq 1\}$. More precisely, our main result is the following
\begin{thm}\label{Teo1}
    Let $u_p$ be a weak solution of \eqref{1} with $f \in L^{N,\infty}(B_R)$
    , radially symmetric and decreasing.
    Then for every $p >1$, it holds the following
    \begin{equation*} \label{MIAsolu}
       \frac{( \abs {\nabla u_p}-1)_+ }{\abs{ \nabla u_p}} \nabla u_p =  -  \left( \frac{\abs{x}}{N}f^{**}(C_N \abs{x}^N)\right)^{\frac{1}{p-1}} \, \frac{x}{\abs{x}} \qquad  \text{a.e. in  }\, B_R
\end{equation*}
and so
 \begin{equation}\label{Dupexplicit}
        \nabla u_p =- \left[\,  1 \,+ \, \left( \frac{\abs{x}}{N}f^{**}(C_N \abs{x}^N)\right)^{\frac{1}{p-1}} \,\right]\, \frac{x}{\abs{x}} \, \qquad  
    \end{equation}
    a.e. in the set  $\{  \abs{ \nabla u_p} > 1\}$.
\end{thm}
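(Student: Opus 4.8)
The plan is to exploit the radial symmetry of the problem: since $f$ is radially symmetric and decreasing, the solution $u_p$ is itself radial and radially decreasing, so the equation reduces to an ODE in $r=|x|$. Writing $u_p=u_p(r)$ and using that $\nabla u_p = u_p'(r)\,x/|x|$ with $u_p'(r)\le 0$, the divergence structure collapses: for a radial vector field $g(r)\,x/|x|$ one has $\operatorname{div}(g(r)\,x/|x|) = g'(r) + \tfrac{N-1}{r}g(r) = r^{1-N}\big(r^{N-1} g(r)\big)'$. Applying this with $g(r) = -\big(|u_p'(r)|-1\big)_+^{p-1}$ (the sign coming from $u_p'\le 0$), the PDE in \eqref{1} becomes
\[
-\,r^{1-N}\Big( r^{N-1}\big(|u_p'(r)|-1\big)_+^{p-1}\Big)' = f(r).
\]
Multiplying by $r^{N-1}$ and integrating from $0$ to $r$, using that the flux vanishes at the origin, gives
\[
r^{N-1}\big(|u_p'(r)|-1\big)_+^{p-1} = \int_0^r s^{N-1} f(s)\,ds .
\]

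The next step is to rewrite the right-hand side in terms of the symmetrization data appearing in the statement. Up to the dimensional constant $C_N = \omega_N/N$ (the measure of the unit ball divided by $N$, so that $|B_r| = C_N r^N$ in the normalization used by Talenti), the integral $\int_0^r s^{N-1} f(s)\,ds$ equals $\tfrac{1}{\omega_N}\int_{B_r} f\,dx$, which in turn is expressed through the maximal function $f^{**}$ of the decreasing rearrangement precisely as $\tfrac{1}{N}\cdot C_N r^N f^{**}(C_N r^N)$ — this is exactly the normalization in Talenti's lemma \cite{Tal}, and since $f$ is already radially decreasing its rearrangement is essentially $f$ itself, so the identity is just a change of variables $t = C_N s^N$. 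Here is where I would invoke the cited Talenti result to legitimize writing things in the form $\big(|x|/N\big) f^{**}(C_N|x|^N)$. Substituting,
\[
r^{N-1}\big(|u_p'(r)|-1\big)_+^{p-1} = \frac{C_N r^N}{N} f^{**}(C_N r^N) = r^{N-1}\cdot \frac{r}{N} f^{**}(C_N r^N),
\]
so $\big(|u_p'(r)|-1\big)_+^{p-1} = \tfrac{r}{N} f^{**}(C_N r^N)$.

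On the set $\{|\nabla u_p|>1\}$, the positive part is active, i.e. $\big(|u_p'(r)|-1\big)_+ = |u_p'(r)|-1$, so we may take $(p-1)$-th roots and solve:
\[
|u_p'(r)| = 1 + \Big(\tfrac{r}{N} f^{**}(C_N r^N)\Big)^{\frac{1}{p-1}} .
\]
Since $u_p' \le 0$, we have $\nabla u_p = u_p'(r)\tfrac{x}{|x|} = -|u_p'(r)|\tfrac{x}{|x|}$, which is exactly \eqref{Dupexplicit}; and multiplying through by $\big(|\nabla u_p|-1\big)_+/|\nabla u_p|$ recovers the first displayed identity of the theorem. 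The main obstacle, and the place requiring genuine care rather than routine computation, is the justification that the weak solution is radial and radially monotone, and that the formal ODE integration is legitimate despite the degeneracy — i.e. that no spurious constant enters at $r=0$ and that the flux $r^{N-1}\big(|u_p'|-1\big)_+^{p-1}$ is a well-defined, absolutely continuous function vanishing at the origin. Uniqueness/symmetry of $u_p$ can be argued from the strict monotonicity of the operator (the problem \eqref{1} is the Euler–Lagrange equation of a convex functional, so its minimizer is unique, and uniqueness forces invariance under rotations); the regularity of the flux then follows from the integrability of $f$. Once those structural facts are in hand, everything else is the elementary ODE manipulation sketched above.
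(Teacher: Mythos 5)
Your approach is genuinely different from the paper's, and it contains a gap precisely at the step you yourself flag as ``requiring genuine care.'' You reduce to a radial ODE, integrate, and invert; the paper instead regularizes problem \eqref{1} by the uniformly elliptic family \eqref{2}, applies Talenti's theorem to the \emph{regularized} problems (where the map $B_{\varepsilon,p}$ is invertible), and then tests the difference of the two equations with $\varphi = u_p - u_p^\varepsilon$ to obtain, via Lemma \ref{lemmaHalpha2}, that $H_{p/2}(\nabla u_p^\varepsilon)\to H_{p/2}(\nabla u_p)$ strongly in $L^2$; passing to the limit in $B_{\varepsilon,p}^{-1}$ gives the stated identity for $H_1(\nabla u_p)$ a.e. This scheme never needs to know that $u_p$ itself is radial.

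The gap in your argument is the justification you offer for radiality. You write that uniqueness, hence rotational invariance, of $u_p$ follows from ``strict monotonicity of the operator'' and the fact that \eqref{1} is the Euler--Lagrange equation of a convex functional. But the operator $\xi\mapsto H_{p-1}(\xi)=(\lvert\xi\rvert-1)_+^{p-1}\xi/\lvert\xi\rvert$ is only \emph{degenerately} monotone (Lemma \ref{lemmaHalpha2} gives $\langle H_{p-1}(\xi)-H_{p-1}(\varsigma),\xi-\varsigma\rangle \ge \beta\lvert H_{p/2}(\xi)-H_{p/2}(\varsigma)\rvert^2$, which is zero whenever both $\lvert\xi\rvert\le 1$ and $\lvert\varsigma\rvert\le 1$), and the underlying integrand $\tfrac1p(\lvert\xi\rvert-1)_+^p$ is convex but flat on the unit ball, hence not strictly convex. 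Consequently the minimizer is \emph{not} unique --- the paper says this explicitly (``widely degenerate problems lose the uniqueness of their solution''), and one can perturb any given solution on a region where $\lvert\nabla u_p\rvert<1$ without disturbing the equation. So you cannot conclude that an arbitrary weak solution $u_p$ is radial, and the ODE reduction then has no justification. What \emph{is} uniquely determined is $H_{p/2}(\nabla u_p)$ (and hence $H_1(\nabla u_p)=(\lvert\nabla u_p\rvert-1)_+\nabla u_p/\lvert\nabla u_p\rvert$), by exactly the monotonicity computation just described; this is the object the theorem pins down and the object the paper's approximation argument is built to identify. Secondary but real: even for a radial solution, the direct application of Talenti's theorem to \eqref{1} fails because $B(r)=(r-1)_+^{p-1}$ is not invertible on $[0,\infty)$ --- this is precisely why the paper introduces $\varepsilon$. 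To salvage your route you would need to first construct \emph{some} radial solution (e.g.\ by the ODE, checking integrability of the flux at the origin and that the resulting function lies in $W^{1,p}_0$), and separately show that any other solution $u_p$ has the same $H_1(\nabla u_p)$ a.e.\ via the monotonicity identity --- which, in effect, re-derives the core of the paper's argument.
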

\noindent As a consequence, we get
  \begin{equation} \label{MIAsolu}
        u_{p} (x) = \frac{1}{N C^{\frac{1}{N}}_{N}}\, \displaystyle{\int\limits^{C_N R^N}_{C_N \abs{x}^N} } \left( 1 + \left( \frac{s^{ \frac{1}{N}}}{N C^{\frac{1}{N}}_{N}}   f^{**}(s)  \right)^{\frac{1}{p-1}} \right) s^{-1+ \frac{1}{N}} \, ds.
\end{equation}
a.e. in the set $\{  \abs{ \nabla u_p} > 1\}$.\\
In order to prove the previous Theorem, we argue by approximation introducing a suitable family of uniformly elliptic problems. For such problems we are legitimate to use the result by Talenti (\cite{Tal}) that, under our assumption on $f$, allows us to write explicitely their solutions $u_{\varepsilon, p }$ for $p\geq2$. 
 Next we demonstrate, through a direct calculation, that this expression represents the solution also for $1<p<2$. After this, we show that a suitable function of the gradient of these solutions, which vanishes in the degeneracy set of \eqref{1}, strongly converges to the same function calculated along the gradient of  the solution to \eqref{1}. More precisely we show that
\begin{align}
( \abs{\nabla u^{\varepsilon}_p} -1)_{+} \, \frac{\nabla u^{\varepsilon}_p}{\abs{\nabla u^{\varepsilon}_p}} \, \rightarrow \, ( \abs{\nabla u_p} -1 )_{+} \, \frac{\nabla u_p}{\abs{\nabla u_p}}  \quad \mathrm{   a.e. \, in  }\, B_R. \notag
\end{align}
 At this point we are able to express $\nabla u_p$ outside the unit ball and this expression can be extended to the whole $B_R$. \\Actually, it is well know that widely degenerate problems lose the uniqueness of their solution and here, through a direct calculation, we can choose the function at \eqref{MIAsolu} as a solution to \eqref{1} in the whole $B_R$. As a matter of fact, it is possible to have infinitely many different solutions because the operator vanishes in the unit ball. Since every solution works, we opt for the one we found previously.\\
It is well known that for solutions to the equation in \eqref{1}, no more than Lipschitz regularity can be expected, even in the case $f=0$. In fact, every Lipschitz continuous function with  Lipschitz constant less than or equal to 1 is a solution to
\begin{equation*}
    - \mathrm{div} \Bigg( \,\Big( \abs{\nabla u_{p}} -1 \Big)_{+}^{p-1} \displaystyle{ \frac{\nabla u_{p}}{\abs{\nabla u_{p}}} } \Bigg) = 0
\end{equation*}
Also, we would like to mention that even when the datum is different from zero, the Lipschitz regularity of the solution holds if the datum belongs to a space smaller than $L^{N}$  \cite{I13, BraLinf, Bra}.\\
Here, as a consequence of Theorem \ref{Teo1}, we immediately obtain the higher integrability for the gradient of the weak solution and its boundedness under weaker assumption of $f$ with respect to the existing literature \cite{I12, Bra}. Indeed, the following holds
\begin{cor}\label{Cor1}
Let $u_p \in W_0^{1,p}(B_R)$ be a solution of \eqref{1}. If $f \in L^{r, \infty}(B_R)$ radially decreasing, then
$$ \nabla u_p \in  L^{q}(B_R), \qquad \text{ with } q< \frac{Nr(p-1)}{N-r}.$$
    In particular if  $f \in L^{N, \infty}(B_R)$, we have that $$\nabla u_p \in L^{ \infty}(B_R).$$
\end{cor}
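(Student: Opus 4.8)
The plan is to read the bound for $\abs{\nabla u_p}$ directly off Theorem \ref{Teo1} and then to estimate the resulting radial expression in $L^q$ using the standard rearrangement bound attached to the weak Lebesgue space. By Theorem \ref{Teo1}, on the set $\{\abs{\nabla u_p}>1\}$ one has the exact identity
\[
\abs{\nabla u_p(x)} \;=\; 1 + \left( \frac{\abs{x}}{N}\, f^{**}(C_N \abs{x}^N) \right)^{\frac{1}{p-1}},
\]
while on the complementary set $\{\abs{\nabla u_p}\le 1\}$ one trivially has $\abs{\nabla u_p(x)}\le 1$. Since $f\ge 0$ forces $f^{**}\ge 0$, the right-hand side above is always $\ge 1$, so both cases can be merged into the single pointwise bound
\[
\abs{\nabla u_p(x)} \;\le\; 1 + \left( \frac{\abs{x}}{N}\, f^{**}(C_N \abs{x}^N) \right)^{\frac{1}{p-1}} \qquad \text{for a.e. } x\in B_R .
\]
As $B_R$ is bounded, the constant $1$ lies in $L^q(B_R)$ for every $q$, so matters reduce to estimating the second term.

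Next I would use the hypothesis $f\in L^{r,\infty}(B_R)$ (with $r>1$, which is also what guarantees the integrability of $f$ needed to invoke Theorem \ref{Teo1}). This gives $f^{*}(t)\le \abs{\abs{f}}_{L^{r,\infty}(B_R)}\, t^{-1/r}$, whence
\[
f^{**}(s) \;=\; \frac{1}{s}\int_0^s f^{*}(t)\,dt \;\le\; \frac{\abs{\abs{f}}_{L^{r,\infty}(B_R)}}{s}\int_0^s t^{-1/r}\,dt \;=\; \frac{r}{r-1}\,\abs{\abs{f}}_{L^{r,\infty}(B_R)}\, s^{-1/r}.
\]
Substituting $s=C_N\abs{x}^N$ and absorbing the dimensional constants, this yields $f^{**}(C_N\abs{x}^N)\le c(N,r)\,\abs{\abs{f}}_{L^{r,\infty}(B_R)}\,\abs{x}^{-N/r}$, and therefore
\[
\left( \frac{\abs{x}}{N}\, f^{**}(C_N\abs{x}^N)\right)^{\frac{1}{p-1}} \;\le\; C\, \abs{x}^{\beta}, \qquad \beta := \frac{r-N}{r(p-1)},
\]
with $C$ depending only on $N$, $r$, $p$ and $\abs{\abs{f}}_{L^{r,\infty}(B_R)}$.

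Finally I would determine for which $q$ the radial power $\abs{x}^{\beta}$ belongs to $L^q(B_R)$: passing to polar coordinates, $\int_{B_R}\abs{x}^{\beta q}\,dx<\infty$ precisely when $\beta q+N-1>-1$, i.e. when $\beta q>-N$. If $r<N$ then $\beta<0$ and this is equivalent to $q<-N/\beta=\frac{Nr(p-1)}{N-r}$, which is exactly the asserted range; if $r=N$ then $\beta=0$, the term is bounded, and we conclude $\nabla u_p\in L^{\infty}(B_R)$ (the same conclusion holds a fortiori when $r>N$, where $\beta>0$). Combining this with the trivial bound on $\{\abs{\nabla u_p}\le 1\}$ and with $1\in L^q(B_R)$ gives the statement. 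There is no serious obstacle once Theorem \ref{Teo1} is in hand; the only two points that deserve a line of care are the passage from the identity valid on $\{\abs{\nabla u_p}>1\}$ to an a.e. bound on all of $B_R$, and the fact that the estimate for $f^{**}$ genuinely requires $r>1$. Everything else is the elementary computation of the summability exponent of a radial power.
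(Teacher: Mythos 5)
Your proof is correct and follows essentially the same route as the paper: read off the explicit radial formula for $\nabla u_p$ from Theorem \ref{Teo1}, bound $f^{**}(C_N\abs{x}^N)$ by a power of $\abs{x}$ via the weak-$L^r$ norm, and then compute the critical exponent for the resulting radial power. The only cosmetic difference is that you estimate $f^{**}$ by first writing $f^*(t)\le \abs{\abs{f}}_{L^{r,\infty}}\,t^{-1/r}$ and integrating, which costs a factor $r/(r-1)$ and uses $r>1$; the paper instead reads $f^{**}(t)\le\abs{\abs{f}}_{L^{r,\infty}}\,t^{-1/r}$ directly off its definition \eqref{NormaLNINf} of the norm in terms of $f^{**}$, avoiding that small detour.
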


Nevertheless, the higher regularity for the weak solutions to the equation in \eqref{1} can be obtained outside the degeneracy set of the equation (see \cite{Amb, AGPass, I13, Grimaldi} ).\\
The explicit expression of $\abs{\nabla u_p}$ given by Theorem \ref{Teo1} allows us to investigate the existence and the regularity of the second derivatives of the solution, outside the degeneracy set of the problem, through direct calculations. \\
Actually, here we are able to establish a higher differentiability result under a Lorentz  assumption on the datum $f$ (compare with \cite{Amb, AGPass, I12, Bra}). Indeed, we have the following
\begin{thm}\label{TeoD2}
Let $f \in L^{r, \infty}(B_R)$, with $1<r\leq N$, be radially decreasing.
Then there exists $u_p$ solution to \eqref{1} such that $$\nabla^2 u_p \in L^q_{loc}, \quad \forall  \, q < \frac{Nr(p-1)}{N+ r (p-2)}.$$
Specifically, we note that if $$f \in L^{N, \infty} (B_R)$$ then $$\nabla^2 u_p \in L^q_{loc}, \quad \forall  \, q < N.$$
\end{thm}
Note that previous results \cite{AGPass, Bra, Clop} , which establish the differentiability of a suitable function of the gradient that vanishes in the degeneracy set, do not imply the $L^q$ regularity of the second derivatives of $u_p$ obtained here.\\ 
Moreover, we are able to recover such results under weaker assumption on the datum $f$. More precisely, we have
\begin{thm}\label{TeoHmezzi}
Let $f $ be radially decreasing in $L^{r, \infty}(B_R)$ with
 $$  r>\frac{Np}{N(p-1)+ (2-p)}$$ then
 if $u_p$ is a solution to \eqref{1} for $p\geq2$, then
 $$( \abs{\nabla u_p} -1 )^{\frac{p}{2}}_{+} \, \frac{\nabla u_p}{\abs{\nabla u_p}} \in W^{1,2}(B_R).$$ 
\end{thm}
 Actually, by means of an example adapted from \cite{BraEsempio},  we show that Theorems \ref{TeoD2} and \ref{TeoHmezzi} and Corollary \ref{Cor1} are sharp in the Lorentz spaces setting.\\
$\quad$ \\
Moreover by Theorem \ref{Teo1} and after selecting $u_p$ as solution in $B_R$, we can analyze the asymptotic behaviour of $u_p$ as $p \to 1^+.$ Actually, arguing as in \cite{Merc}, we are able  to establish the following
\begin{thm}\label{TeoMerc}
Let $f \in L^{N, \infty}(B_R)$ with $\abs{\abs{f}}_{L^{N, \infty}(B_R)} \leq NC_{N}^{\frac{1}{N}}.$ Then there exists $u_p$ that solves problem  \eqref{1}, for any $p > 1$, and that converges a.e. in $B_R$ to a function $u \in W_0 ^ {1,1}$.\\
Moreover there exists a vector field $z$ such that
\begin{align*}
  &  z \in L^{\infty} (B_R, \mathbb{R}^N) \quad \text{ with } \quad\abs{\abs{z}}_{\infty} \leq 1;\\
   & - \mathrm{div} z =f; \\
  &  z \cdot \nu \leq 0 \qquad \mathscr{H}^{N-1}- \text{ a.e. on } \quad \partial B_R ;
\end{align*}
where $\nu$ denotes the outer normal to $\partial B_R$;
$$z \cdot \nabla u = \abs{\nabla u} \quad \text{as measures in } B_R.$$
\end{thm}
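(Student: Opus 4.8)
The plan is to mirror the strategy of \cite{Merc}: extract from Theorem~\ref{Teo1} uniform bounds on the family $\{u_p\}_{p>1}$, pass to a limit, and then pass to the limit in the equation to build the vector field $z$. The hypothesis $\abs{\abs{f}}_{L^{N,\infty}(B_R)}\le NC_N^{1/N}$ enters first here: it is exactly what makes $\tfrac{\abs{x}}{N}f^{**}(C_N\abs{x}^N)$ (equivalently $\tfrac{s^{1/N}}{NC_N^{1/N}}f^{**}(s)$ with $s=C_N\abs{x}^N$) bounded by $1$ on $B_R$. Hence, by \eqref{Dupexplicit}, the term raised to the power $\tfrac1{p-1}$ is $\le1$, so $\abs{\nabla u_p}\le 2$ a.e.\ for every $p>1$, and by the representation \eqref{MIAsolu} also $0\le u_p(x)\le 2(R-\abs{x})$; thus $\{u_p\}_{p>1}$ is bounded in $W_0^{1,\infty}(B_R)$.

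Next I would let $p\to1^+$. Since $\tfrac1{p-1}\to+\infty$, the bracket in \eqref{MIAsolu} converges pointwise in the radial variable (the term raised to the power $\tfrac1{p-1}$ tends to the indicator of the set where $\tfrac{s^{1/N}}{NC_N^{1/N}}f^{**}(s)=1$, since by hypothesis the base never exceeds $1$), so dominated convergence gives $u_p(x)\to u(x)$ a.e.; the uniform $W^{1,\infty}$ bound then forces $u\in W_0^{1,\infty}(B_R)\subset W_0^{1,1}(B_R)$, $u_p\to u$ strongly in $W^{1,q}(B_R)$ for every $q<\infty$, and $\nabla u_p\rightharpoonup\nabla u$ weakly-$*$ in $L^\infty(B_R,\mathbb R^N)$, with $\nabla u$ identified as the a.e.\ limit in \eqref{Dupexplicit}.

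For the vector field, the natural object is $z_p:=(\abs{\nabla u_p}-1)_+^{p-1}\dfrac{\nabla u_p}{\abs{\nabla u_p}}$, for which $-\mathrm{div}\,z_p=f$ holds in $\mathcal D'(B_R)$ by the weak formulation of \eqref{1}. The key simplification is that Theorem~\ref{Teo1} gives, in $\{\abs{\nabla u_p}>1\}$, both $(\abs{\nabla u_p}-1)_+^{p-1}=\tfrac{\abs{x}}{N}f^{**}(C_N\abs{x}^N)$ and $\tfrac{\nabla u_p}{\abs{\nabla u_p}}=-\tfrac{x}{\abs{x}}$, so there $z_p(x)=-\tfrac{f^{**}(C_N\abs{x}^N)}{N}\,x$; on the complementary degeneracy set both this expression and $z_p$ vanish, so $z_p$ is in fact independent of $p$ and equals $z(x):=-\tfrac{f^{**}(C_N\abs{x}^N)}{N}\,x$ a.e.\ in $B_R$. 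Then $\abs{z(x)}=\tfrac{\abs{x}}{N}f^{**}(C_N\abs{x}^N)\le1$ by the hypothesis, so $z\in L^\infty(B_R,\mathbb R^N)$ with $\abs{\abs{z}}_\infty\le1$ and $-\mathrm{div}\,z=f$, and its admissible normal trace satisfies on $\partial B_R$ the inequality $z\cdot\nu=-\tfrac{R}{N}f^{**}(C_NR^N)\le0$ since $f\ge0$.

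It then remains to establish $z\cdot\nabla u=\abs{\nabla u}$ as measures in $B_R$ — equivalently, since $u\in W^{1,1}$, the pointwise identity $z\cdot\nabla u=\abs{\nabla u}$ a.e. — and this is where I expect the real work. One inequality, $z\cdot\nabla u\le\abs{\nabla u}$, is immediate from $\abs{z}\le1$; for the reverse, following \cite{Merc} I would test the equation for $u_p$ against $u_p$ itself, getting $\int_{B_R}(\abs{\nabla u_p}-1)_+^{p-1}\abs{\nabla u_p}\,dx=\int_{B_R}fu_p\,dx$, i.e.\ $\int_{B_R}z_p\cdot\nabla u_p\,dx=\int_{B_R}fu_p\,dx$, and pass to the limit (on the right through $u_p\to u$, on the left through the $p$-independence of $z_p$ together with $\nabla u_p\rightharpoonup\nabla u$) to reach $\int_{B_R}z\cdot\nabla u\,dx=\int_{B_R}fu\,dx$. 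The genuine obstacle is to upgrade this global identity to the pointwise one: this requires carefully exploiting the structural relation $\nabla u_p=\bigl(1+\abs{z_p}^{1/(p-1)}\bigr)\tfrac{z_p}{\abs{z_p}}$ coming from \eqref{Dupexplicit}, together with a lower semicontinuity argument, and controlling the contribution of the region where $\abs{\nabla u_p}$ approaches the degeneracy threshold as $p\to1^+$; it is precisely here that the sharp value $NC_N^{1/N}$ in the assumption on $\abs{\abs{f}}_{L^{N,\infty}(B_R)}$ is needed, as it keeps $\abs{z}$, hence the limiting field, admissible.
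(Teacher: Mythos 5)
Your preliminary steps are essentially correct and match the paper's strategy: the hypothesis $\|f\|_{L^{N,\infty}}\le NC_N^{1/N}$ gives $\tfrac{|x|}{N}f^{**}(C_N|x|^N)\le1$, hence $|\nabla u_p|\le 2$ uniformly; the extended solutions $\tilde u_p$ from \eqref{NuovaSol} converge a.e.\ together with their gradients by dominated convergence; the field $z_p:=H_{p-1}(\nabla u_p)=-\tfrac{f^{**}(C_N|x|^N)}{N}x$ is independent of $p$, which is exactly the content of the Proposition in Section~\ref{sec:Stability}; and the three bullet properties of $z$ then follow at once. This is also essentially all the paper itself supplies before deferring to \cite{Merc}.

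The gap is in the very step you flag as ``the real work,'' and it is more than a technicality. When $f\not\equiv 0$ one has $f^{**}>0$, hence $|z(x)|>0$ for $x\ne 0$, so Theorem~\ref{Teo1} forces $\nabla u_p=-\bigl(1+|z|^{1/(p-1)}\bigr)\tfrac{x}{|x|}$ a.e.\ for \emph{every} solution: the non-uniqueness lives only on a null set. The a.e.\ limit is therefore $\nabla u=-\bigl(1+\chi_{\{|z|=1\}}\bigr)\tfrac{x}{|x|}$, so $|\nabla u|\ge 1$ a.e., while $z\cdot\nabla u=|z|\,|\nabla u|$. Thus $z\cdot\nabla u<|\nabla u|$ strictly on $\{|z|<1\}$, a set of positive measure for generic admissible data (e.g.\ whenever $\|f\|_{L^{N,\infty}}<NC_N^{1/N}$ strictly), and the pointwise identity you are trying to prove fails for this $u$. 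Your proposed route cannot repair this: the global identity $\int_{B_R}z\cdot\nabla u\,dx=\int_{B_R}fu\,dx$ is a tautological consequence of $-\mathrm{div}\,z=f$ and Green's formula, so it carries no pointwise information, and no lower-semicontinuity argument can localize it. Structurally, the obstruction is the ``$+1$'' in $|\nabla u_p|=1+|z|^{1/(p-1)}$: it survives the limit $p\to1^+$, whereas for the $p$-Laplacian in \cite{Merc} one has $|\nabla u_p|=|z|^{1/(p-1)}\to 0$ on $\{|z|<1\}$, and precisely this vanishing of the limit gradient off $\{|z|=1\}$ is what makes the pairing identity hold there. Mimicking \cite{Merc} verbatim therefore breaks down in the widely degenerate setting, and before the proof can be closed you must address how the identity $z\cdot\nabla u=|\nabla u|$ is to be interpreted (or which solutions $u_p$ are to be selected), since it does not hold for the natural limit you constructed.
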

\noindent The main difference with respect to the arguments used in \cite{Merc} is that we have the explicit expression of the solution only outside the degeneracy set. Hence, we need to appropriately define the solution inside this set and then pass to the limit.\\

 \noindent Below we provide an overview of the paper's contents. After a breef introduction of the notations and definitions we use in the paper ( Section  \ref{sec:prelim}), we explore ( Section  \ref{sec:Approx}) a family of approximating problems and we present the proof of Theorem \ref{Teo1} and Corollary \ref{Cor1}. After  establishing some regularity for the second-order derivatives $\nabla^2 u_p$ of the weak solution $u_p$ of a family \eqref{1} (Section  \ref{sec:Regolarity}), we analyze the behaviour of the family $(\nabla u_p)_p$ as $p \to 1$ (Section \ref{sec:Stability}). Finally, we conclude with an example (Section   \ref{sec:Example}).

\begin{singlespace}

\section{Notations and preliminaries}\label{sec:prelim}
\end{singlespace}
In this section, we shall recall some tools and we fix notations and definitions that will be useful to prove our results.\\
We denote by $B_R \subset \mathbb{R}^N$ the open ball with radius $R>0$ centered at the origin, i.e. $$B_R = \, \{x \in  \mathbb{R}^{N}  :   \abs{x} < R \}.$$

Let $f: \Omega \subset \mathbb{R}^N \to \mathbb{R}$ be a real-valued measurable function..  
Its distribution function $\mu_f$  is defined by 
\begin{equation*}
    \mu_f (\tau) \, = \, \mu \left( \{ x \in \Omega \, \big| \, \abs{f(x)}> \tau \}\right), \hspace{4em} (\tau \geq 0),
\end{equation*}
where here and in the sequel, $\mu (A)$ denotes the Lebesgue measure of a measurable subset $A$ of $\mathbb{R}^{N}$.\\
The decreasing rearrangement of $f$ is the function $f^*$ defined on $[0, \infty)$ by 
\begin{equation*}
f^*(\lambda)= \inf \{\tau \geq 0 \,: \,   \mu_f (\tau)\leq \lambda  \}=\sup \{\tau \geq 0 \,: \,   \mu_f (\tau)> \lambda  \} , \hspace{3em} (\lambda \geq 0),
\end{equation*}
since $\mu_f$ is right-continuous and decreasing.
Observe that $f^*$ depends only on the absolute value $\abs{f}$.\\
\medskip 
\noindent Furthermore with the simbol $f^{**}$, we will denote the maximal function of $f^*$ defined as follows
\begin{equation}\label{f**}
f^{**}(t)= \frac{1}{t} \int_0 ^t f^*(s) ds, \hspace{4em}(t>0).
\end{equation}
The Lorentz space $L^{r, \infty}$ , for $1<r<\infty$, consists of all Lebesgue measurable functions $f$ such that
\begin{equation}\label{NormaLNINf}
    \abs{\abs{f}}_{L^{r,\infty}} = \sup_{0<t<\infty}{t^{\frac{1}{r}} f^{**} (t)} < + \infty.
\end{equation}
For further needs, we recall that 
\begin{equation}\label{fstar<fstarstar}
    f^* (t) \leq f^{**}(t) \quad \text{a.e.   } \, t>0.
\end{equation}
\noindent When dealing with widely degenerate \textit{p}-Laplace equations, the ellipticity bounds are expressed using the auxiliary function $H_{\alpha}(\xi) : \mathbb{R}^{N} \to  \mathbb{R}^{N}$ defined by  
\begin{equation}\label{H_alpha}
 H_{\alpha}(\xi) :=
\begin{cases}
    (\abs{\xi} -1 )^{\alpha} _+ \displaystyle{ \frac{\xi}{\abs{\xi}} } \hspace{4em} \text{  if  } \xi \in \mathbb{R}^{N} \setminus \{0\},\\
    0 \hspace{9em} \text{  if  } \xi =0,
    \end{cases}
\end{equation}
where $\alpha >0$ is a parameter.\\
 Indeed, it holds that
\begin{lem}\label{lemmaHalpha2}
    If $ \, 1 < p < \infty$, there exists a constant $\beta \equiv \beta(p,N)>0 $ such that
    \begin{equation*}
        \langle H_{p-1}(\xi)- H_{p-1}(\varsigma), \xi-\varsigma \rangle \, \geq \, \beta \abs{H_{\frac{p}{2}}(\xi) - H_{\frac{p}{2}}(\varsigma)}^2
    \end{equation*}
    and
      \begin{equation*}
          \abs{ H_{p-1}(\xi)- H_{p-1}(\varsigma)} \leq (p-1) \left( \abs{H_{\frac{p}{2}}(\xi)}^{\frac{p-2}{p}} + \abs{H_{\frac{p}{2}}(\varsigma)}^{\frac{p-2}{p}} \right) \abs{H_{\frac{p}{2}}(\xi) - H_{\frac{p}{2}}(\varsigma)}. 
    \end{equation*}
    for every $\xi, \varsigma \in \mathbb{R}^{N}$. In case $p \geq 2, \, \beta= \beta(p).$
\end{lem}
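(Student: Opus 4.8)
The plan is to establish both inequalities by reducing the vector-valued statement to a one-dimensional computation along the segment joining $\xi$ and $\varsigma$, exploiting the fact that $H_\alpha(\xi) = g_\alpha(|\xi|)\,\xi/|\xi|$ is a radial vector field with scalar profile $g_\alpha(t) = (t-1)_+^\alpha$. First I would record the elementary structure of such fields: for $F(\xi) = g(|\xi|)\xi/|\xi|$ with $g$ nondecreasing, $g(0)=0$, one has the pointwise monotonicity identity $\langle F(\xi)-F(\varsigma),\xi-\varsigma\rangle \ge \int_0^1 \big[ g'(|\gamma(t)|)\,|\dot\gamma_\parallel|^2 + \tfrac{g(|\gamma(t)|)}{|\gamma(t)|}|\dot\gamma_\perp|^2\big]\,dt$ type bounds, but it is cleaner to use the classical reduction: for radial fields it suffices to prove the scalar inequalities relating the increments of $g_{p-1}$ and $g_{p/2}$, together with the standard lemma that for monotone radial fields the vector inequality follows from the scalar one with the same constant (this is, e.g., the mechanism behind the well-known estimates for $|\xi|^{p-2}\xi$; see the treatment in the papers on widely degenerate equations cited in the introduction, in particular \cite{Amb,AGPass}). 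So the crux is: setting $a = (|\xi|-1)_+$, $b=(|\varsigma|-1)_+$, show
\begin{equation*}
\big(a^{p-1}-b^{p-1}\big)(a-b) \ge \beta\,\big(a^{p/2}-b^{p/2}\big)^2
\end{equation*}
and
\begin{equation*}
\big|a^{p-1}-b^{p-1}\big| \le (p-1)\big(a^{(p-2)/2}+b^{(p-2)/2}\big)\big|a^{p/2}-b^{p/2}\big|
\end{equation*}
for all $a,b\ge 0$ (with the usual interpretation when exponents are negative and an argument vanishes; note the left-hand sides vanish whenever $a=b$, in particular on the set where both $|\xi|,|\varsigma|\le 1$, so the degeneracy is harmless).

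Second, I would prove these two scalar inequalities. For the first, by homogeneity one may normalize and reduce to the function $\phi(s) = \dfrac{(s^{p-1}-1)(s-1)}{(s^{p/2}-1)^2}$ for $s\ge 0$, $s\ne 1$, and show it is bounded below by a positive constant $\beta(p,N)$; this is a one-variable calculus exercise, the limiting value at $s\to 1$ being $\dfrac{p-1}{(p/2)^2} = \dfrac{4(p-1)}{p^2}$, and one checks $\phi$ has no smaller value — for $p\ge2$ the infimum is attained in the limit $s\to1$ so $\beta=4(p-1)/p^2$ depends only on $p$, which matches the final sentence of the statement; for $1<p<2$ one still gets a positive lower bound, now genuinely depending on $N$ only through the reduction from the vector to scalar form. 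For the second inequality, write $a^{p-1}-b^{p-1} = (a^{p/2})^{(2p-2)/p} - (b^{p/2})^{(2p-2)/p}$ and apply the elementary estimate $|X^\theta - Y^\theta| \le \theta\,(X^{\theta-1}+Y^{\theta-1})\,|X-Y|$ with $X=a^{p/2}$, $Y=b^{p/2}$, $\theta = (2p-2)/p = 2(p-1)/p$; since $\theta \le p-1$... actually one must track the constant carefully, but $\theta(X^{\theta-1}+Y^{\theta-1}) \le (p-1)(X^{(p-2)/p\cdot(p/2)\cdot(2/p)}\cdots)$ — cleaner: $\theta - 1 = (p-2)/p$, so $X^{\theta-1} = a^{(p-2)/2}$, and $\theta = 2(p-1)/p \le p-1$ precisely when $p\ge 2$, while for general $p>1$ one absorbs the discrepancy into the constant, giving the stated bound with factor $(p-1)$ (a slightly generous constant, but the statement only claims $\le (p-1)(\cdots)$).

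Third, I would lift these scalar facts back to vectors. For the monotonicity inequality, use the representation $\langle H_{p-1}(\xi)-H_{p-1}(\varsigma),\xi-\varsigma\rangle$ and the standard splitting into radial and tangential parts: writing $|\xi|=\rho$, $|\varsigma|=\sigma$ and $\langle \xi/|\xi|,\varsigma/|\varsigma|\rangle = \cos\vartheta$, one has
\begin{equation*}
\langle H_{p-1}(\xi)-H_{p-1}(\varsigma),\xi-\varsigma\rangle = g_{p-1}(\rho)\rho + g_{p-1}(\sigma)\sigma - \big(g_{p-1}(\rho)\sigma + g_{p-1}(\sigma)\rho\big)\cos\vartheta,
\end{equation*}
and similarly $|H_{p/2}(\xi)-H_{p/2}(\varsigma)|^2 = g_{p/2}(\rho)^2 + g_{p/2}(\sigma)^2 - 2 g_{p/2}(\rho)g_{p/2}(\sigma)\cos\vartheta$; since both are nondecreasing in $(1-\cos\vartheta)$ with matching coefficient structure, the worst case is $\cos\vartheta = 1$ (collinear, same direction), which is exactly the scalar inequality in $a=(\rho-1)_+$, $b=(\sigma-1)_+$ — here the monotonicity $g_{p-1}(\rho)\sigma + g_{p-1}(\sigma)\rho \ge g_{p/2}(\rho)g_{p/2}(\sigma)\cdot(\text{const})$ must be checked, and this is where a clean but slightly fiddly case analysis enters. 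For the second inequality the lifting is easier since $|H_{p-1}(\xi)-H_{p-1}(\varsigma)| \le |g_{p-1}(\rho)-g_{p-1}(\sigma)| + \min(g_{p-1}(\rho),g_{p-1}(\sigma))\,|\xi/|\xi|-\varsigma/|\varsigma||$ and the same decomposition bounds $|H_{p/2}(\xi)-H_{p/2}(\varsigma)|$ from below, so the scalar estimate suffices with the displayed constant.

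The main obstacle I anticipate is the tangential-part bookkeeping in the first (monotonicity) inequality: making sure that the contribution of the angle $\vartheta$ on the left dominates, with the \emph{same} constant $\beta$, the corresponding contribution on the right, uniformly over all configurations of $\rho,\sigma$ relative to $1$ (both inside the unit ball, both outside, or one of each). On the region where $\rho\le 1$ or $\sigma\le 1$ several terms collapse to zero and the inequality is either trivial or reduces to a degenerate case of the scalar bound, but verifying that no sign is lost at the interface $\rho=1$ or $\sigma=1$ — where $g_{p-1}$ and $g_{p/2}$ are only $C^0$, not $C^1$ — requires care. Once that is dispatched, the constant $\beta$ comes out explicitly as (a fixed multiple of) $4(p-1)/p^2$ for $p\ge 2$, recovering the final claim, and for $1<p<2$ one keeps track of the single place where $N$ intervenes.
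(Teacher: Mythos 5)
The paper does not actually prove Lemma~\ref{lemmaHalpha2}: it simply defers to \cite[Lemma 4.1]{Bra} and \cite[Lemma 2.5]{Amb}, so there is no in-paper argument to compare against. That said, your direct route --- reduce to a scalar estimate for $g(t)=(t-1)_+$ by exploiting the radial structure of $H_\alpha$, and lift back to vectors by observing that both sides, as functions of $\cos\vartheta=\langle\xi/|\xi|,\varsigma/|\varsigma|\rangle$ with $|\xi|,|\varsigma|$ fixed, are \emph{affine} in $\cos\vartheta$ --- is a legitimate strategy, and for the monotonicity inequality it does work once you check both endpoints. At $\cos\vartheta=1$ one gets $(g(\rho)^{p-1}-g(\sigma)^{p-1})(\rho-\sigma)\ge\beta(g(\rho)^{p/2}-g(\sigma)^{p/2})^2$, which follows from the pure scalar bound $(a^{p-1}-b^{p-1})(a-b)\ge\tfrac{4(p-1)}{p^2}(a^{p/2}-b^{p/2})^2$ via $\rho-\sigma\ge g(\rho)-g(\sigma)$; at $\cos\vartheta=-1$ the estimate holds with constant $1$ by AM--GM and $\rho\ge g(\rho)$. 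Since $\tfrac{4(p-1)}{p^2}\le 1$ always, affinity in $\cos\vartheta$ gives the full vector inequality with an $N$-\emph{independent} constant for every $p>1$. This means your own argument, if completed, actually \emph{disproves} the need for $N$-dependence when $1<p<2$, so the closing remark that ``one keeps track of the single place where $N$ intervenes'' is unfounded in your line of reasoning --- there is no such place. You should either drop that claim or explain where it would come from; as written it is a hand-wave toward a feature of the cited proofs, not of yours.

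The genuine gap is in the second (Lipschitz-type) inequality for $1<p<2$. Writing $X=a^{p/2}$, $Y=b^{p/2}$, $\theta=2(p-1)/p$, the mean value theorem gives $|X^\theta-Y^\theta|\le\theta(X^{\theta-1}+Y^{\theta-1})|X-Y|$, i.e.\ the stated bound \emph{with constant $\theta=2(p-1)/p$}, not $p-1$. You correctly note $\theta\le p-1$ exactly when $p\ge2$; but for $1<p<2$ one has $\theta>p-1$, so ``absorbing the discrepancy into the constant'' goes the wrong way --- your argument establishes a \emph{weaker} constant than the one claimed, and the stated factor $(p-1)$ is simply not recovered. (Numerically the $(p-1)$ version seems to survive in scattered examples, but that is because the degenerate case $b=0$ makes the right-hand side formally infinite when $(p-2)/p<0$; a clean proof for $1<p<2$ would need a sharper ingredient than the convexity/concavity estimate you invoke.) In addition, the scalar bound $\phi(s)=\dfrac{(1-s^{p-1})(1-s)}{(1-s^{p/2})^2}\ge\tfrac{4(p-1)}{p^2}$ on $[0,1)$ --- which is the computational heart of the first inequality --- is asserted as ``a one-variable calculus exercise'' without proof; it is true, but verifying monotonicity of $\phi$ (or otherwise locating the infimum at $s\to1^-$) is precisely the nontrivial step and should be carried out. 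In short: the skeleton of your reduction is sound for the monotonicity inequality, but the second inequality is not established for $1<p<2$, the scalar minimization is not done, and the $N$-dependence commentary is inconsistent with what your own argument shows.
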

\noindent For the proof we refer to \cite[Lemma 4.1]{Bra} and \cite[Lemma 2.5]{Amb}.\\
Next lemma can be deduced by \cite[Lemma 8.3]{Giusti}.
\begin{lem}\label{LemmaCostante}
    For every $\alpha >0$ there exists a positive constant $c(\alpha)$ such that 
    \begin{equation*}
        \frac{1}{c} \Big\lvert  \abs{z}^{\alpha - 1}z - \abs{w}^{\alpha - 1}w \Big\rvert  \leq \abs{z-w}(\abs{z}+ \abs{w})^{\alpha - 1} \leq  c \Big\lvert  \abs{z}^{\alpha - 1}z - \abs{w}^{\alpha - 1}w \Big\rvert
    \end{equation*}
    for every $z, w \in \mathbb{R}^N$.
\end{lem}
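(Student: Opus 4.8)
The plan is to reduce the two-sided estimate to the two classical one-sided estimates it encodes and to prove each of them by scaling away most of the parameters. Throughout write $V(z):=|z|^{\alpha-1}z$ (with $V(0)=0$), so that the claim reads $\tfrac1c|V(z)-V(w)|\le|z-w|(|z|+|w|)^{\alpha-1}\le c|V(z)-V(w)|$. First I would dispose of the degenerate configurations: if $z=w$ all three members vanish, and if exactly one of $z,w$ vanishes the three quantities all equal $|z|^{\alpha}$ (resp.\ $|w|^{\alpha}$), so $c=1$ works. Hence assume $z,w\neq0$; by the symmetry of the statement under $z\leftrightarrow w$ assume moreover $|w|\le|z|$. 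Every term in the inequality is positively $\alpha$-homogeneous in the pair $(z,w)$ and $V$ is positively $\alpha$-homogeneous, so I may normalise $|z|=1$ and set $r:=|w|\in(0,1]$. Since $(1+r)^{\alpha-1}$ is bounded above and below by positive constants depending only on $\alpha$, it then suffices to establish $c(\alpha)^{-1}|z-w|\le|V(z)-V(w)|\le c(\alpha)|z-w|$ whenever $|z|=1$, $|w|\le1$.

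For the upper bound I would use the telescoping identity
\[
|z|^{\alpha-1}z-|w|^{\alpha-1}w \;=\; |z|^{\alpha-1}(z-w)\;+\;\bigl(|z|^{\alpha-1}-|w|^{\alpha-1}\bigr)w ,
\]
which gives $|V(z)-V(w)|\le|z|^{\alpha-1}|z-w|+\bigl||z|^{\alpha-1}-|w|^{\alpha-1}\bigr|\,|w|$. With $|z|=1$ the first term is exactly $|z-w|$ and the second equals $\bigl||w|-|w|^{\alpha}\bigr|$; the elementary one-variable bound $\bigl|t-t^{\alpha}\bigr|\le C(\alpha)(1-t)$ on $[0,1]$ (the quotient $|t-t^{\alpha}|/(1-t)$ is continuous on $[0,1)$ with finite limit $|\alpha-1|$ as $t\to1^-$, and is trivially bounded near $t=0$) turns the second term into $C(\alpha)(1-|w|)\le C(\alpha)|z-w|$, whence $|V(z)-V(w)|\le\bigl(1+C(\alpha)\bigr)|z-w|$. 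Without normalising, the same split works together with the routine power comparisons $|z|^{\alpha-1}\le 2^{|1-\alpha|}(|z|+|w|)^{\alpha-1}$ and $\bigl||z|^{\alpha-1}-|w|^{\alpha-1}\bigr|\,|w|\le C(\alpha)(|z|+|w|)^{\alpha-1}(|z|-|w|)$.

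For the lower bound I would pass through the monotonicity inequality
\[
\bigl\langle V(z)-V(w),\,z-w\bigr\rangle\;\ge\;c(\alpha)\,(|z|+|w|)^{\alpha-1}\,|z-w|^{2},
\]
after which Cauchy--Schwarz yields $|V(z)-V(w)|\ge c(\alpha)(|z|+|w|)^{\alpha-1}|z-w|$, i.e.\ the right-hand inequality of the lemma. To prove the monotonicity bound, again by $(\alpha+1)$-homogeneity and symmetry I take $|z|=1$, $r=|w|\le1$, set $\langle z,w\rangle=r\cos\theta$, and record the exact identities
\[
\bigl\langle V(z)-V(w),\,z-w\bigr\rangle=(1-r^{\alpha})(1-r)+(r+r^{\alpha})(1-\cos\theta),\qquad |z-w|^{2}=(1-r)^{2}+2r(1-\cos\theta).
\]
All the summands on the right are nonnegative, so it suffices to verify the two scalar inequalities $1-r^{\alpha}\ge\min(1,\alpha)(1-r)$ (which follows from convexity of $t\mapsto t^{\alpha}$ when $\alpha\ge1$ and from concavity, comparing with the tangent at $t=1$, when $0<\alpha<1$) and $1+r^{\alpha-1}\ge\min(1,\alpha)$ (immediate since $r\le1$). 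Adding them gives the monotonicity bound with $c(\alpha)=\tfrac12\min(1,\alpha)$.

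I expect no genuine obstacle: the one thing to keep straight is the dichotomy $\alpha\ge1$ versus $0<\alpha<1$, which fixes the direction of the power comparisons and which convexity is invoked, together with the related feature that for $0<\alpha<1$ the segment joining $w$ to $z$ may pass through the origin — but since the argument works through the scalar reductions above rather than through integrating $DV$ along that segment, no singularity ever arises. Combining the two bounds and undoing the normalisations yields the claim with $c(\alpha)$ the maximum of the two constants produced; alternatively, one may simply invoke \cite[Lemma 8.3]{Giusti}.
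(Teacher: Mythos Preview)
The paper does not actually prove this lemma: it simply records that it ``can be deduced by \cite[Lemma 8.3]{Giusti}'' and moves on. Your proposal goes well beyond this, giving a self-contained elementary argument (normalise by homogeneity to $|z|=1$, $|w|=r\le1$; bound the factor $(1+r)^{\alpha-1}$ by constants; prove the upper bound via the telescoping split and the scalar estimate $|t-t^{\alpha}|\le C(\alpha)(1-t)$; prove the lower bound through the monotonicity inequality, reduced to the two scalar facts $1-r^{\alpha}\ge\min(1,\alpha)(1-r)$ and $r+r^{\alpha}\ge\min(1,\alpha)\,r$). The argument is correct: the convexity/concavity justifications for the first scalar inequality are valid (for $\alpha\ge1$ one uses that the chord from $(0,0)$ to $(1,1)$ dominates the convex graph, giving $t^{\alpha}\le t$; for $0<\alpha<1$ the tangent at $t=1$ dominates the concave graph, giving $t^{\alpha}\le 1-\alpha(1-t)$), and the second scalar inequality is trivially true since $1+r^{\alpha-1}\ge1\ge\min(1,\alpha)$.

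What you gain over the paper's one-line citation is an explicit, checkable proof with a concrete constant $c(\alpha)$ (essentially $\tfrac12\min(1,\alpha)$ up to the factor coming from $(1+r)^{\alpha-1}$), at the cost of some length. Since you also mention the Giusti reference at the end, your write-up strictly contains the paper's treatment.
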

For further needs, we record that 
\begin{align}\label{costantiNeed}
\Big\lvert  H_1 (\xi) - H_1(\eta) \Big\rvert &= \Big\lvert (\abs{\xi}-1)_+\frac{\xi}{\abs{\xi}} - (\abs{\eta}-1)_+\frac{\eta}{\abs{\eta}} \Big\rvert \notag \\
& \leq c \Big\lvert (\abs{\xi}-1)_+^{\frac{p}{2}-1}(\abs{\xi}-1)_+\frac{\xi}{\abs{\xi}} - (\abs{\eta}-1)_+^{\frac{p}{2}-1}(\abs{\eta}-1)_+\frac{\eta}{\abs{\eta}} \Big\rvert \, \cdot \notag \\
& \hspace{15mm} \cdot \Big( \,\abs{\xi}-1)_+ + (\abs{\eta}-1)_+ \, \Big)^{1 - \frac{p}{2}} 
\notag \\
& = \Big\lvert  H_{\frac{p}{2}} (\xi) - H_{\frac{p}{2}}(\eta) \Big\rvert \,\Big(\abs{\xi}-1)_+ + (\abs{\eta}-1)_+ \Big)^{1 - \frac{p}{2}} .
\end{align}
\\
\noindent Indeed it suffices to use Lemma \ref{LemmaCostante} with $z = (\abs{\xi}-1)_+\frac{\xi}{\abs{\xi}}$, $w= (\abs{\eta}-1)_+\frac{\eta}{\abs{\eta}}$ and $\alpha= \frac{p}{2}$.\\
\\
Finally, we recall the definition of weak solution.
\begin{defn}
    A function $u \in W_0^{1,p}(B_R) $ is a weak solution to \eqref{1} if and only if the following integral identity:
    \begin{equation*}
        \int_{B_R} \langle H_{p-1} (\nabla u), \nabla \varphi \rangle \, dz \,= \, \int_{B_R} f \varphi \, dz.
    \end{equation*}
    holds for any test function $\varphi \in C_0 ^\infty (B_R)$.\\
\end{defn}

Now, we retrieve a significant theorem, which is  a particular case  of  \cite[Theorem 1]{Tal}  suitable for our purposes. 
\begin{thm}\label{Talentii}
    For an open and bounded set $\Omega \subset \mathbb{R}^N,$ 
    let us consider 
    \begin{equation}\label{DriTALENTI}
    \begin{cases}
   - \mathrm{div}\Big( a \left(  \nabla v_p \right)  \Big) = \, f
   \qquad                    \, \mathrm{in} \quad   \Omega \qquad\\
       v_{p}=g \, \hspace{9em} \mathrm{on} \quad \partial{\Omega}.
      \end{cases}
      \end{equation}
    The hypotheses we assume are the following:
    \begin{enumerate}
        \item Ellipticity Condition: 
        there exists a function $A: [0, +\infty] \to [0, +\infty]$ such that
        \begin{equation*}
\begin{aligned}
 \text{i.  }& \, A (r) \text{ is convex in } \quad 0 \leq r \leq \infty, \\ 
 \\
\text{ii.  }& \, \lim_{r \to 0 } \frac{A(r)}{r} \, = \,0 \quad \text{ and } \, \lim_{r \to \infty} \frac{A(r)}{r} = \infty \\
\\
  \text{iii.  }& \, \langle a (\xi), \xi \rangle \, \geq A(\abs{\xi}) \quad \text{for all } \xi \in \mathbb{R}^N.
\end{aligned}
\end{equation*}
\item The right-hand side $f$ is measurable, integrable in 
 $\Omega$ and belong to some suitable Lorentz space. 
\item Suppose that $ g \in L^{\infty}(B_{R}) \cap W^{1,A} (B_R)$. 
\end{enumerate}
Then for a weak solution $v$ to  \eqref{DriTALENTI} from the convex Orlicz-Sobolev class  $W^{1,A} (B_R)$, i.e. such that $$\int_{B_R} A(\abs{\nabla u}) dx < \infty,$$
it holds the following 
\begin{equation*}\label{vminoreu}
 v^{*}(C_N \abs{x}^N) \leq  u(x),
 \end{equation*}
 where $u(x)$ is a weak solution of the symmetrized problem
  \begin{equation*}
    \begin{cases}
   - \mathrm{div}\Big( a \left(  \nabla u_p \right)  \Big) = \, f^*
   \qquad                    \, \mathrm{in} \quad   B_R \qquad\\
       u_{p}=g \, \hspace{9em} \mathrm{on} \quad \partial{B_R}.
      \end{cases}
      \end{equation*}
and it has the following explicit expression  
\begin{equation*}
 u(x)= \, sup \abs{g} \, + \, \int_{C_N \abs{x}^N}^{C_N R^N} B^{-1} \left(  \frac{r^{-1 + \frac{1}{N}}}{NC_{N}^{\frac{1}{N}}} \, \int_{0}^{r} f^*(s) ds \,\right)   \frac{r^{-1 + \frac{1}{N}}}{NC_{N}^{\frac{1}{N}}} dr,
\end{equation*}
where $\displaystyle {B(r)=\frac{A(r)}{r}}$ and $C_NR^N$ is the measure of the ball centered at the origin and radius $R$ in $\mathbb{R}^N$.
\end{thm}
\begin{singlespace}

\section{Proof of Theorem \ref{Teo1}}\label{sec:Approx}
\end{singlespace}
In order to prove Theorem \ref{Teo1}, we argue by approximation, introducing a family of uniformly elliptic problems. With this aim, for $\varepsilon \in (0,1]$, we consider the following family of Dirichlet symmetrized problems
\begin{equation}\label{2}
\begin{cases}
    - \mathrm{div} \left( \Bigg( \,\Big( \abs{\nabla u_{p}^{\varepsilon}} -1 \Big)_{+} +\varepsilon \abs{\nabla u_{p}^{\varepsilon}} \,\Bigg)^{p-1} \, \displaystyle{ \frac{\nabla u_{p}^{\varepsilon}}{\abs{\nabla u_{p}^{\varepsilon}}}} \right)= \, f   \qquad \text{                     in   }\, B_R \\
    u_{p}^{\varepsilon}=0 \hspace{21em} \text{                                on  } \partial{B_R},
\end{cases}
\end{equation}
where $f$ radially decreasing is the one fixed in problem \eqref{1}.\\
\textbf{Case $\mathbf{p \geq 2}$}:
We observe that, setting
\begin{equation*}
     a_{\varepsilon, p} (x, \xi) \, =\, \Big(\, (\abs{\xi} -1)_{+} +\varepsilon \abs{\xi} \,\Big)^{p-1} \frac{\xi}{\abs{\xi}},
\end{equation*}
the equation in \eqref{2} satisfy the assumptions of Theorem \ref{Talentii}, with 
\begin{equation*}
     A_{\varepsilon,p} (r) \, =\, \Big(\, (r -1)_{+} +\varepsilon r \,\Big)^{p-1}r,
\end{equation*}
with $r>0$.\\
More precisely, we can easily check that
\begin{align*}
 \text{i.  }& \, A_{\varepsilon,p} (r) \text{ is convex for } r \geq 0 \\ 
 \\
\text{ii.  }& \, \lim_{r \to 0 } \frac{A_{\varepsilon,p} (r)}{r} \, = \,0 \\
\\
  \text{iii.  }& \, \langle a_{\varepsilon,p} (\xi), \xi \rangle \, =\,A_{\varepsilon,p} (\abs{\xi}).
\end{align*}
Therefore we are legitimate to use Theorem \ref{Talentii} to deduce that the weak solution $u^\varepsilon _p \in W^{1, A_{\varepsilon}}(B_R)$ of \eqref{2} has the following expression for $p \geq2$: 
\begin{equation*} \label{SolTal}
        u^{\varepsilon} _{p} (x) = \int^{C_N R^N}_{C_N \abs{x}^N} B^{-1}_{\varepsilon,p} \left( \frac{s^{-1+ \frac{1}{N}}}{N C^{\frac{1}{N}}_{N}} \int^s _0  f^* (\sigma)\, d\sigma  \right)\frac{s^{-1+ \frac{1}{N}}}{N C^{\frac{1}{N}}_{N}} \, ds,
\end{equation*}
where $B_{\varepsilon,p} (r)=\frac{A_{\varepsilon,p} (r)}{r}$, and so
\begin{align}\label{Dupepsilon}
\nabla u^{\varepsilon}_p (x) &= -B^{-1}_{\varepsilon, p} \left( \frac{\abs{x}}{N C_{N} \abs{x}^N} \int^{C_N \abs{x}^N} _0  f^* (\sigma)\, d\sigma  \right) \frac{x}{\abs{x}} = \\ \notag
\\ \notag
& =-B^{-1}_{\varepsilon, p} \left( \frac{\abs{x}}{N } f^{**} (C_{N} \abs{x}^N)  \right) \frac{x}{\abs{x}},
\end{align}
where in the last equality we used the definition of $f^{**}(t)$ given at \eqref{f**}.\\
Note that with our choice of $A_{\varepsilon,p}(r)$, the Orlicz space $W^{1, A_{\varepsilon}}(B_R)$ coincides with $W^{1, p}(B_R)$. In fact if $ \displaystyle{\nabla h \in L^p (B_R)}$, we have
\begin{equation*}
   \int_{B_R} A_{\varepsilon, p}(\nabla h) dx = \int_{B_R} \Bigg( \,\Big( \abs{\nabla h} -1 \Big)_{+} +\varepsilon \abs{\nabla h} \,\Bigg)^{p-1} \abs{\nabla h} dx \leq (1+ \varepsilon)^{p-1} \int_{B_R} \abs{\nabla h }^p dx < +\infty.
\end{equation*}
On the other hand if $\int_{B_R} A_{\varepsilon,p}(\abs{\nabla h}) < +\infty$, we have
\begin{align*}
   \int_{B_R} \abs{\nabla h}^p dx &= \int_{B_R} \abs{\nabla h}^{p-1} \abs{\nabla h} dx\\
   &= \int_{B_R} \left( \abs{\nabla h} +1-1 \right)^{p-1} \abs{\nabla h} dx\\
   & \leq \int_{ B_R \cap \{\abs{ \nabla h} \leq 1 \}} dx + \int_{ B_R \cap \{\abs{ \nabla h} \geq 1 \}}  \Big( ( \abs{\nabla h} -1 )_{+} + \frac{1}{\varepsilon}\, \varepsilon \, \abs{\nabla h} \Big)^{p-1} \abs{\nabla h} dx\\
   &\leq \abs{B_R}+ \frac{1}{\varepsilon^{p-1}} \int_{B_R} A_{\varepsilon,p}(\abs{\nabla h}) dx < +\infty.
   \end{align*}
For further needs we record the following
\begin{equation}\label{ B^-1}
    B^{-1}_{\varepsilon,p}(s) = \begin{cases}
        \frac{1}{\varepsilon} s^{\frac{1}{p-1}}  \hspace{4em} 0 \leq s \leq \varepsilon^{p-1} \\
        \\
        \frac{1+s^{\frac{1}{p-1}}}{1+\varepsilon} \hspace{5em}  \,s > \varepsilon^{p-1} .
    \end{cases}
\end{equation}
\textbf{Case $\mathbf{1<p<2}$}: The weak solution of the problem \eqref{2} for $p \geq 2$, i.e.
\begin{equation*}\label{soltotale}
        u^{\varepsilon} _{p} (x) = \int^{C_N R^N}_{C_N \abs{x}^N} B^{-1}_{\varepsilon,p} \left( \frac{s^{-1+ \frac{1}{N}}}{N C^{\frac{1}{N}}_{N}} \int^s _0  f^* (\sigma)\, d\sigma  \right)\frac{s^{-1+ \frac{1}{N}}}{N C^{\frac{1}{N}}_{N}} \, ds,
\end{equation*}
proves to be the weak solution to \eqref{2} also for $1<p<2$. 
Indeed, by \eqref{Dupepsilon} we have
\begin{align*}
\abs{\nabla u^{\varepsilon}_p (x)} &= B^{-1}_{\varepsilon, p} \left( \frac{\abs{x}}{N } f^{**} (C_{N} \abs{x}^N)  \right)  = \\
&=\begin{cases}
  \frac{1}{\varepsilon^p} \left( \frac{\abs{x}}{N } f^{**} (C_{N} \abs{x}^N)  \right)^{\frac{1}{p-1}} \hspace{8em} \, \text{if   } \quad \displaystyle\frac{\abs{x}}{N } f^{**} (C_{N} \abs{x}^N) \leq \varepsilon^{p-1} \\
         \displaystyle \frac{1+ \left( \frac{\abs{x}}{N } f^{**} (C_{N} \abs{x}^N)  \right)^{ \frac{1}{p-1}}}{1+ \varepsilon}  
        \hspace{7em} \text{if   } \, \quad \frac{\abs{x}}{N } f^{**} (C_{N} \abs{x}^N) > \varepsilon^{p-1}
\end{cases}
\end{align*}
So, setting $J= \frac{\abs{x}}{N } f^{**} (C_{N} \abs{x}^N)$, we get
\begin{align*}
    \Bigg( \,\Big( \abs{\nabla u_{p}^{\varepsilon}} -1 \Big)_{+} +\varepsilon \abs{\nabla u_{p}^{\varepsilon}} \,\Bigg)^{p-1}= & \begin{cases}
     \left[  \displaystyle\varepsilon^p \frac{1}{\varepsilon^p} \left( \frac{\abs{x}}{N } f^{**} (C_{N} \abs{x}^N)  \right)^{\frac{1}{p-1}} \right]^{p-1} \hspace{2em} \, \text{if   } J \leq \varepsilon^{p-1} \\
           \\
         \displaystyle \left[ (1+ \varepsilon)\abs{\nabla u_{p}^{\varepsilon}} -1 \right]^{ p-1}  
        \hspace{7em} \text{if   }  J > \varepsilon^{p-1}
    \end{cases}\\
\\
=&
 \begin{cases}
       \displaystyle \frac{\abs{x}}{N } f^{**} (C_{N} \abs{x}^N)\quad  \hspace{14em} \, \text{if   } \quad  \displaystyle J \leq \varepsilon^{p-1} \\
           \\
         \displaystyle \left[ (1+ \varepsilon)\displaystyle \frac{1+ \left( \frac{\abs{x}}{N } f^{**} (C_{N} \abs{x}^N)  \right)^{ \frac{1}{p-1}}}{1+ \varepsilon} -1 \right]^{ p-1}  
        \hspace{3em}\,  \text{if   } \quad J > \varepsilon^{p-1}
    \end{cases} \\
    \\
    =&   \displaystyle \frac{\abs{x}}{N } f^{**} (C_{N} \abs{x}^N).
\end{align*}
Then we have
    \begin{align*}
        \Bigg( \,\Big( \abs{\nabla u_{p}^{\varepsilon}} -1 \Big)_{+} +\varepsilon \abs{\nabla u_{p}^{\varepsilon}} \,\Bigg)^{p-1} \, \displaystyle{ \frac{\nabla u_{p}^{\varepsilon}}{\abs{\nabla u_{p}^{\varepsilon}}}} \,= \,- \displaystyle \frac{\abs{x}}{N } f^{**} (C_{N} \abs{x}^N) \frac{x}{\abs{x}}\,=\,-\displaystyle \frac{x}{N } f^{**} (C_{N} \abs{x}^N) .
    \end{align*}
At this point we can calculate
\begin{align*}
         \mathrm{div} &\left( \Bigg( \,\Big( \abs{\nabla u_{p}^{\varepsilon}} -1 \Big)_{+} +\varepsilon \abs{\nabla u_{p}^{\varepsilon}} \,\Bigg)^{p-1} \, \displaystyle{ \frac{\nabla u_{p}^{\varepsilon}}{\abs{\nabla u_{p}^{\varepsilon}}}} \right)\,\\
         \qquad &= \,\mathrm{div} \left( - \displaystyle \frac{x}{N } f^{**} (C_{N} \abs{x}^N)\right)\\
         &= \sum_{i=1}^N \partial_{x_i} \left( - \displaystyle \frac{x_i}{C_N N \abs{x}^N } \int_0^{C_{N} \abs{x}^N}f^{*} (\sigma) d\sigma\right)\\
         &= \sum_{i=1}^N \Bigg( - \displaystyle \frac{1}{C_N N \abs{x}^N } \int_0^{C_{N} \abs{x}^N}f^{*} (\sigma) d\sigma  -x_i \partial_{x_i} \Big(  \displaystyle \frac{1}{C_N N \abs{x}^N } \int_0^{C_{N} \abs{x}^N}f^{*} (\sigma) d\sigma   \Big) \Bigg) \\
         &= - \displaystyle \frac{1}{C_N  \abs{x}^N } \int_0^{C_{N} \abs{x}^N}f^{*} (\sigma) d\sigma - \sum_{i=1}^N x_i \Bigg( \frac{1}{C_N N} \Bigg[-N \abs{x}^{-N-1}\frac{x_i}{\abs{x}} \int_0^{C_{N} \abs{x}^N}f^{*} (\sigma) d\sigma + \\
         &\qquad + \qquad \frac{1}{\abs{x}^N} f^{*}(C_N \abs{x}^N)C_N N \abs{x}^{N-1} \frac{x_i}{\abs{x}} \Big) \\
        &= - \frac{1}{C_N  \abs{x}^N } \int_0^{C_{N} \abs{x}^N} f^{*} (\sigma) \, d\sigma - \sum_{i=1}^N  \left( - \frac{1}{C_N} \frac{x_i^2}{\abs{x}^{N+2}} \int_0^{C_{N} \abs{x}^N} f^{*} (\sigma) \, d\sigma \right.\\
&\qquad \left. + \frac{x_i^2}{\abs{x}^{2}} f^{*}(C_N \abs{x}^N) \right)\\
&= - \frac{1}{C_N  \abs{x}^N } \int_0^{C_{N} \abs{x}^N} f^{*} (\sigma) \, d\sigma -  \left( - \frac{1}{C_N} \frac{1}{\abs{x}^{N}} \int_0^{C_{N} \abs{x}^N} f^{*} (\sigma) \, d\sigma + f^{*}(C_N \abs{x}^N) \right)\\
&= - f^{*}(C_N \abs{x}^N),
    \end{align*}
Therefore, we have shown
\begin{equation*}
      - \mathrm{div} \left( \Bigg( \,\Big( \abs{\nabla u_{p}^{\varepsilon}} -1 \Big)_{+} +\varepsilon \abs{\nabla u_{p}^{\varepsilon}} \,\Bigg)^{p-1} \, \displaystyle{ \frac{\nabla u_{p}^{\varepsilon}}{\abs{\nabla u_{p}^{\varepsilon}}}} \right)= \, f .
\end{equation*}
As a consequence the explicit weak solution we obtained using Talenti's Theorem for 
$p \geq 2$ also serves as a weak solution for $1<p<2$. In other words, we utilized Talenti's Theorem to determine the solution for $p \geq 2$, and subsequently demonstrated that this solution satisfies our equation for every $p>1$. 
\begin{proof}[Proof of Theorem \ref{Teo1} :]\label{proof1}
Using the expression of $\nabla u^{\varepsilon}_p (x)$ at \eqref{Dupepsilon} and the expression of $B^{-1}_{\varepsilon, p}(s)$ at \eqref{ B^-1} together with the assumpton $f \in L^{N, \infty}(B_R)$, we deduce that
\begin{align}
   \int_{B_R} \abs{\nabla u^{\varepsilon}_p (x)}^{p} dx &= \int_{B_R} \left(B^{-1}_{\varepsilon,p} \left( \frac{\abs{x}}{N } f^{**} (C_{N} \abs{x}^N)  \right) \right)^{p} dx \notag\\
  \notag \\
   & \leq \int_{B_R} \left(B^{-1}_{\varepsilon, p} \left( \frac{1}{NC^{\frac{1}{N}} _N } \abs{\abs{f}}_{L^{N, \infty}}  \right) \right)^{p} dx \notag \\
   \notag\\
   &=  \abs{B_R} \begin{cases}
           \frac{1}{\varepsilon^p} \left( \frac{1}{NC^{\frac{1}{N}} _N } \abs{\abs{f}}_{L^{N, \infty}} \right)^{\frac{p}{p-1}} \hspace{8em} \, \text{if   } \quad  \frac{\abs{\abs{f}}_{L^{N, \infty}}}{NC^{\frac{1}{N}}_N} \leq \varepsilon^{p-1} \\
           \\
         \displaystyle\Bigg( \frac{1+ \Big( \frac{1}{NC^{\frac{1}{N}} _N } \abs{\abs{f}}_{L^{N, \infty}} \Big)^{ \frac{1}{p-1}}}{1+ \varepsilon} \Bigg)^{p} 
        \hspace{6em} \text{if   } \quad \frac{\abs{\abs{f}}_{L^{N, \infty}}}{NC^{\frac{1}{N}}_N} > \varepsilon^{p-1} , 
           \end{cases} \label{IntBR}
   \end{align}
where in the second line of the previous estimate we used that $B^{-1} _{\varepsilon, p}(s)$ is increasing for $s \geq 0$ and the definition of the norm $\abs{\abs{f}}_{L^{N, \infty}}$ given at \eqref{NormaLNINf}.\\
   Estimate \eqref{IntBR} implies that
   \begin{equation} \label{IntDup}
       \int_{B_R} \abs{\nabla u^{\varepsilon}_p (x)}^{p} dx \leq \, 2^p C_N  R^N  \, \left[  1+\left( \frac{\abs{\abs{f}}_{L^{N, \infty}}}{NC^{\frac{1}{N}} _N } \right)^{\frac{p}{p-1}}    \right],
   \end{equation}
   and so the sequence $(\nabla u^{\varepsilon} _p)_\varepsilon$ has norm bounded in $L^p$ independently of $\varepsilon$, for each fixed $p>1$.\\
    Our next aim is to prove that the sequence $H_{\frac{p}{2}} (\nabla u^{\varepsilon}_p)$ strongly converges to $H_{\frac{p}{2}} (\nabla u_p$) in $L^2$.
Since $u^{\varepsilon}_p$ solves \eqref{2} and $u_p$ solves \eqref{1} we have
\begin{align}
\int_{B_R} \Big\langle \,( \abs{\nabla u_{p}} -1)_{+}^{p-1} \, \frac{\nabla u_{p}}{\abs{\nabla u_{p}}} \, , \nabla \varphi \Big\rangle \, &=
\int_{B_R} f \, \varphi = \notag \\
&= \int_{B_R}  \Big\langle \,( \abs{\nabla u^{\varepsilon}_p} -1)_{+}\, + \varepsilon \abs{\nabla u^{\varepsilon}_p} \,)^{p-1} \, \frac{\nabla u^{\varepsilon}_p}{\abs{\nabla u^{\varepsilon}_p}} , \, \nabla \varphi \Big\rangle , \notag
\end{align}
$\forall \varphi \in C_0 ^\infty (B_R)$ and obviously, by density, also for every $ \varphi \in W^{1,p}_0 (B_R)$. \\
Therefore, we may choose as test function in the previous identity  $\varphi =u_p - u^{\varepsilon}_p \in W^{1,p}_0 (B_R)$, thus getting
\begin{equation*}
\int_{B_R}  \left\langle \,\left( \, \left( \abs{\nabla u^{\varepsilon}_p} -1 \right)_{+}\, + \varepsilon \abs{\nabla u^{\varepsilon}_p} \,\right)^{p-1} \, \frac{\nabla u^{\varepsilon}_p}{\abs{\nabla u^{\varepsilon}_p}} \, -  \,\left( \abs{\nabla u_{p}} -1 \right)_{+}^{p-1} \, \frac{\nabla u_{p}}{\abs{\nabla u_{p}}} , \, \left(\nabla u_p - \nabla u^{\varepsilon}_p \right)\right\rangle  dx = \, 0
\end{equation*}
or equivalently
 \begin{align} 
 \int_{B_R} \bigg\langle  \, \left( \,\left( \abs{\nabla u^{\varepsilon}_p} -1\right)_{+}\, + \varepsilon \abs{\nabla u^{\varepsilon}_p} \,\right)^{p-1} \, \frac{ \nabla u^{\varepsilon}_p}{\abs{\nabla u^{\varepsilon}_p}} & - \left( \, \abs{\nabla u^{\varepsilon}_p} -1 \right)_{+}^{p-1} \, \frac{\nabla u^{\varepsilon}_p}{\abs{\nabla u^{\varepsilon}_p}} ,\, \left(\nabla u_p - \nabla u^{\varepsilon}_p \right) \bigg\rangle \, dx \notag \\ 
\notag \\
+ \int_{B_R} \bigg\langle  \, \left( \, \abs{\nabla u^{\varepsilon}_p} -1 \right)_{+}^{p-1} \, \frac{\nabla u^{\varepsilon}_p}{\abs{\nabla u^{\varepsilon}_p}}  &- \left( \, \abs{\nabla u_{p}} -1 \right)_{+}^{p-1} \, \frac{\nabla u_{p}}{\abs{\nabla u_{p}}} \, , \left( \nabla u_p -  \nabla u^{\varepsilon}_p \right) \,\bigg\rangle dx= \, 0 ,\notag
\end{align}
and so
\begin{equation}\label{DUpe-DUp}
 \begin{aligned}
 \int_{B_R} &\bigg\langle  \, \left( \, \abs{\nabla u^{\varepsilon}_p} -1 \right)_{+}^{p-1} \, \frac{\nabla u^{\varepsilon}_p}{\abs{\nabla u^{\varepsilon}_p}}  - \left( \, \abs{\nabla u_{p}} -1 \right)_{+}^{p-1} \, \frac{\nabla u_{p}}{\abs{\nabla u_{p}}} \, , \left(    \nabla u^{\varepsilon}_p - \nabla u_p \right) \bigg\rangle dx \\
 \\
 & =\int_{B_R} \bigg\langle \bigg[ \, \left( \,\left( \abs{\nabla u^{\varepsilon}_p} -1\right)_{+}\, + \varepsilon \abs{\nabla u^{\varepsilon}_p} \,\right)^{p-1} \,  - \left( \, \abs{\nabla u^{\varepsilon}_p} -1 \right)_{+}^{p-1} \, \,\bigg]\frac{\nabla u^{\varepsilon}_p}{\abs{\nabla u^{\varepsilon}_p}} , \left(\nabla u_p - \nabla u^{\varepsilon}_p \right) \bigg\rangle dx.
\end{aligned}
\end{equation}
By the definition at \eqref{H_alpha}, the left hand side of previous identity can be written as follows
\begin{align}
 \int_{B_R}& \bigg\langle   \, \left( \, \abs{\nabla u^{\varepsilon}_p} -1 \right)_{+}^{p-1} \, \frac{\nabla u^{\varepsilon}_p}{\abs{\nabla u^{\varepsilon}_p}}  - \left( \, \abs{\nabla u_{p}} -1 \right)_{+}^{p-1} \, \frac{\nabla u_{p}}{\abs{\nabla u_{p}}} \, , \left(    \nabla u^{\varepsilon}_p - \nabla u_p \right) \bigg\rangle dx = \notag \\
 &=\int_{B_R} \langle \, H_{p-1}(\nabla u^{\varepsilon}_p)  \, - H_{p-1} (\nabla u_p ) \, , \, \nabla u^{\varepsilon}_p -  \nabla u_p \rangle dx . \label{DU=H}
 \end{align}
Next, by virtue of Lemma \ref{lemmaHalpha2}, we get
 \begin{equation}\label{HMAGG}
 \int_{B_R} \langle \, H_{p-1}(\nabla u^{\varepsilon}_p)  \, - H_{p-1} (\nabla u_p ) \, , \, \nabla u^{\varepsilon}_p -  \nabla u_p \rangle dx \geq c(p, N) \int_{B_R} \left\lvert  H_{\frac{p}{2}}(\nabla u^{\varepsilon}_p)  \, - H_{\frac{p}{2}} (\nabla u_p) \right\lvert^2  dx.
 \end{equation}
Hence, combining \eqref{DU=H} and \eqref{HMAGG}, we conclude that
\begin{equation}\label{IntHp}
 \begin{aligned}
 \int_{B_R} &\bigg\langle  \, \left( \, \abs{\nabla u^{\varepsilon}_p} -1 \right)_{+}^{p-1} \, \frac{\nabla u^{\varepsilon}_p}{\abs{\nabla u^{\varepsilon}_p}}  - \left( \, \abs{\nabla u_{p}} -1 \right)_{+}^{p-1} \, \frac{\nabla u_{p}}{\abs{\nabla u_{p}}} \, , \left(    \nabla u^{\varepsilon}_p - \nabla u_p \right) \bigg\rangle dx\\
 &\geq c(p, N) \int_{B_R} \left\lvert  H_{\frac{p}{2}}(\nabla u^{\varepsilon}_p)  \, - H_{\frac{p}{2}} (\nabla u_p) \right\lvert^2 dx.
 \end{aligned}
 \end{equation}
The right hand side of \eqref{DUpe-DUp} can be estimated as follows
\begin{align}\label{IntRight}
& \left\lvert \int_{B_R}  \bigg\langle \left( \left( \lvert \nabla u^{\varepsilon}_p \rvert - 1 \right)_+ + \varepsilon \lvert \nabla u^{\varepsilon}_p \rvert \right)^{p-1} - \left( \lvert \nabla u^{\varepsilon}_p \rvert - 1 \right)_+^{p-1}  \frac{\nabla u^{\varepsilon}_p}{\lvert \nabla u^{\varepsilon}_p \rvert}, \left( \nabla u_p - \nabla u^{\varepsilon}_p \right) \bigg\rangle dx\right\rvert  \notag \\
&  \leq \int_{B_R} \left\lvert  \, \left( \,\left( \abs{\nabla u^{\varepsilon}_p} -1\right)_{+}\, + \varepsilon \abs{\nabla u^{\varepsilon}_p} \,\right)^{p-1} \,  - \left( \, \abs{\nabla u^{\varepsilon}_p} -1 \right)_{+}^{p-1} \, \,\right\lvert   \abs {\nabla u_p - \nabla u^{\varepsilon}_p }  dx \notag \\
&\leq c_{p}\, \int_{B_R}   \Big|  \,( \abs{\nabla u^{\varepsilon}_p} -1)_{+}\, + \varepsilon \abs{\nabla u^{\varepsilon}_p} \,  \,- \left( \abs{\nabla u^{\varepsilon}_p} -1 \right)_{+} \Big| \cdot \, \notag\\
&  \hspace{6em}  \cdot \left( \,\left( \abs{\nabla u^{\varepsilon}_p} -1\right)_{+}^{2} + \left( \varepsilon \abs{\nabla u^{\varepsilon}_p} \,\right)^{2} \right)^{\frac{p-2}{2}} \, \abs{\nabla u_p -  \nabla u^{\varepsilon}_p} dx  \notag\\
& = c_{p} \varepsilon \,\int_{B_R}  \abs{\nabla u^{\varepsilon}_p}\,\abs{\nabla u_p -  \nabla u^{\varepsilon}_p}
    \left( \,\left( \abs{\nabla u^{\varepsilon}_p} -1\right)_{+}^{2} +\varepsilon^2 \abs{\nabla u^{\varepsilon}_p}^{2} \right)^{\frac{p-2}{2}} dx:= \text{I}.
\end{align}
where we used Lemma \ref{LemmaCostante} with $z=\left( \,  \left( \abs{\nabla u^{\varepsilon}_p} -1\right)_{+}\, + \varepsilon \abs{\nabla u^{\varepsilon}_p} \, \right)$, $w=\left( \, \abs{\nabla u^{\varepsilon}_p} -1 \right)_{+}$ and $\alpha = p-1.$\\
Since we are dealing with all exponents $p>1$, we are going to estimate last integral in \eqref{IntRight} separating the cases $1<p<2$ and $p \geq 2$.\\
Let us suppose first $p \geq 2$. In this case, since $p-2 \geq 0$, we have
\begin{align}
   \text{I} =  c_{p} \varepsilon \, &\int_{B_R}  \abs{\nabla u^{\varepsilon}_p}\,\abs{\nabla u_p -  \nabla u^{\varepsilon}_p}
    \left( \,\left( \abs{\nabla u^{\varepsilon}_p} -1\right)_{+}^{2} +\varepsilon^2 \abs{\nabla u^{\varepsilon}_p}^{2} \right)^{\frac{p-2}{2}} dx \notag\\
    \notag\\
    & \leq c_{p} \varepsilon\, \int_{B_R}  \abs{\nabla u^{\varepsilon}_p}\,\abs{\nabla u_p -  \nabla u^{\varepsilon}_p}
     \, \Big(  \abs{\nabla u^{\varepsilon}_p}^{2} +\varepsilon^2 \abs{\nabla u^{\varepsilon}_p}^{2}  \Big)^{\frac{p-2}{2}} dx \notag\\
    \notag \\
     & \leq c_{p}\, \varepsilon (1+ \varepsilon^2)^{\frac{p-2}{2}}\, \int_{B_R}      \abs{\nabla u^{\varepsilon}_p}^{p-1}\,\abs{\nabla u_p -  \nabla u^{\varepsilon}_p} \, dx \notag\\
    \notag \\
     & \leq  2^{\frac{p-2}{2}}c_{p}\, \varepsilon \, \left( \int_{B_R}      \abs{\nabla u^{\varepsilon}_p}^{p} dx + \int_{B_R} \abs{\nabla u^{\varepsilon}_p}^{p-1} \, \abs{\nabla u_p} \, dx \right) \notag\\
    \notag \\
     & \leq  c_{p} \varepsilon\, \left( \int_{B_R}      \abs{\nabla u^{\varepsilon}_p}^{p} dx + \int_{B_R}  \abs{\nabla u_p}^{p} dx \right), \notag \label{Pmagg1}
    \end{align}
where we used  Young's inequality and that $\varepsilon \leq 1$. \\
Now, suppose that $1<p<2$. In this case, since $p-2 <0$, we have that 
\begin{equation*}
    \Big( \, ( \abs{\nabla u^{\varepsilon}_p}-1)_{+} ^{2} \, + \, \varepsilon^2 \abs{\nabla u^{\varepsilon}_p}^{2} \, \Big)^{\frac{p-2}{2}} \leq \varepsilon^{p-2} \abs{\nabla u_{p}^{\varepsilon}}^{p-2}
\end{equation*}
and so
    \begin{align}
  \text{I} &=   c_{p}\,\varepsilon \, \int_{B_R}  \abs{\nabla u^{\varepsilon}_p}\,\abs{\nabla u_p -  \nabla u^{\varepsilon}_p}
    \left( \,\left( \abs{\nabla u^{\varepsilon}_p} -1\right)_{+}^{2} +\varepsilon^2 \abs{\nabla u^{\varepsilon}_p}^{2} \right)^{\frac{p-2}{2}} \, dx  \notag \\
   \notag \\
      & \leq c_{p}\, \varepsilon^{p-1}\, \int_{B_R}      \abs{\nabla u^{\varepsilon}_p}^{p-1}\,\abs{\nabla u_p -  \nabla u^{\varepsilon}_p} \, dx  \notag \\ 
      \notag  \\
     & \leq  c_{p}\,\varepsilon^{p-1}\, \left( \int_{B_R}      \abs{\nabla u^{\varepsilon}_p}^{p} \, dx  + \int_{B_R}  \abs{\nabla u_p}^{p} \, dx \right), \notag
    \end{align}
where we used Young's inequality again. 
Hence, joining both cases we conclude that
\begin{align}\label{bothPmaggcompreso}
   I \leq c_p (\varepsilon+\varepsilon^{p-1}) \, \left( \int_{B_R}      \abs{\nabla u_p}^{p} \,dx  + \, \int_{B_R}  \abs {\nabla u^{\varepsilon}_p}^p \, dx \right)
   \end{align}
At this point, in both cases, inserting \eqref{IntHp}, \eqref{IntRight}, \eqref{bothPmaggcompreso} in  \eqref{DUpe-DUp}, we have
\begin{align*}
      \int_{B_R} \left\lvert \, H_{\frac{p}{2}}(\nabla u^{\varepsilon}_p)  \, - H_{\frac{p}{2}} (\nabla u^p) \right\lvert ^2 dx &\leq c_{p}\,(\varepsilon+\varepsilon^{p-1}) \,\left[ \left( \int_{B_R}      \abs{\nabla u_p}^{p} dx \right)^{\frac{1}{p}}\, + \, \left( \int_{B_R}  \abs {\nabla u^{\varepsilon}_p}^p dx\right)^\frac{1}{p}\, \right] \\
      & \leq c_{p}\,(\varepsilon+\varepsilon^{p-1}) \,\left[ \left( \int_{B_R}      \abs{\nabla u_p}^{p} dx\right)^{\frac{1}{p}}\, + \, 2^p \,  C_N \, R^N \, \left(  1+ \frac{\abs{\abs{f}}_{L^{N, \infty}}}{NC^{\frac{1}{N}} _N }    \right)^{\frac{1}{p-1}}  \right],
\end{align*}
where last inequality is due to the estimate \eqref{IntDup}. \\
Taking the limit as $\varepsilon \rightarrow 0$  in previous inequality, we obtain:
\begin{equation*}
   \lim_{\varepsilon \to 0} \int_{B_R} \left\lvert \, H_{\frac{p}{2}}(\nabla u^{\varepsilon}_p)  \, - H_{\frac{p}{2}} (\nabla u_p) \right\lvert^2 dx \,= \,0,
\end{equation*}
i.e. $$ H_{\frac{p}{2}}(\nabla u^{\varepsilon}_p) \to  H_{\frac{p}{2}}(\nabla u_p) \text{ strongly in }L^2 (B_{R}).$$ 
Therefore, up to a not relabeled subsequence, we also have that
\begin{align}\label{Hconvergenza}
    H_{\frac{p}{2}}(\nabla u^{\varepsilon}_p )\to  H_{\frac{p}{2}}(\nabla u_p) \qquad \text{a.e. in }\, B_R.
\end{align}

Using \eqref{costantiNeed} and \eqref{Hconvergenza}, we deduce that
\begin{align}
( \abs{\nabla u^{\varepsilon}_p} -1)_{+} \, \frac{\nabla u^{\varepsilon}_p}{\abs{\nabla u^{\varepsilon}_p}} \, \rightarrow \, ( \abs{\nabla u_p} -1 )_{+} \, \frac{\nabla u_p}{\abs{\nabla u_p}}  \quad \mathrm{   a.e. \, in  }\, B_R. \notag
\end{align}
Recalling \eqref{Dupepsilon}
and observing that $$ \lim_{\varepsilon \to 0^+} B^{-1}_{\varepsilon,p}(s) =  1+ s ^{\frac{1}{p-1}} =: \Tilde{B}_p (s) \quad \text{ a.e. in }(0, +\infty), $$  
by the continuity of the function $(\abs{t}-1)_+ \frac{t}{\abs{t}}$, we get
\begin{align}\label{(Du-1)Du}
     ( \abs{\nabla u_p} -1 )_{+} \, \frac{\nabla u_p}{\abs{\nabla u_p}} = 
     -\left( \frac{ \abs{x}}{N}f^{**}(C_N \abs{x}^N)\right)^{\frac{1}{p-1}}\, \frac{x}{\abs{x}} \quad \mathrm{ a.e. \, in \,} \, B_R.
\end{align}
The map $\, t \rightarrow (\abs{t}-1)_+ \frac{t}{\abs{t}}$ is invertible in $\mathbb{R}^N \setminus \{ \abs{t} \leq 1 \}$ and its inverse is the map is given by $$s \rightarrow \frac{\abs{s}+1}{\abs{s}}s.$$ Therefore from \eqref{(Du-1)Du}, we deduce that

    \begin{align*}
        \nabla u_p =- \left[\,  1 \,+ \, \left( \frac{\abs{x}}{N}f^{**}(C_N \abs{x}^N)\right)^{\frac{1}{p-1}} \,\right]\, \frac{x}{\abs{x}},
    \end{align*}
a.e. in the set where $\{ \abs{\nabla u_p}>1   \},$ i.e. the conclusion.
\end{proof}
Now, we can immediately give the
\begin{proof}[Proof of Corollary \ref{Cor1} :]
By virtue of \eqref{NormaLNINf}, we have
\begin{equation}\label{f**cor}
f^{**}(C_N \abs{x}^N) \leq \lvert \lvert f \rvert \rvert_{L^{r, \infty}} (C_N \abs{x}^N)^{- \frac{1}{r}}
\end{equation}
    Then
    \begin{align}
       \int_{B_R} \abs{\nabla u_p (x)}^{q} dx &=
       \int_{B_R \cap \{ \abs{ \nabla u_p} \leq 1\}} \abs{\nabla u_p (x)}^{q} dx +  \int_{B_R \cap \{ \abs{ \nabla u_p} > 1\}} \abs{\nabla u_p (x)}^{q} dx \notag \\
       \notag \\
     & \leq \abs{B_R} + \int_{B_R \cap \{ \abs{ \nabla u_p} > 1\}} \left[\,  1 \,+ \, \left( \frac{\abs{x}}{N}f^{**}(C_N \abs{x}^N)\right)^{\frac{1}{p-1}} \,\right]^q dx \notag \\
     \notag \\
     &\leq \, C  R^N \, + C \int_{B_R \cap \{ \abs{\nabla u_p} > 1\}} \abs{x}^{\frac{q}{p-1}- (\frac{N}{r})\frac{q}{p-1}} \, dx. \notag
   \end{align}
   In order to have $\abs{\nabla u_p} \in L^q(B_R)$, it suffices to satisfy the following condition $$\frac{q}{p-1} \left(1- \frac{N}{r} \right) + N>0,$$  i.e. if $f \in L^{r, \infty}(B_R)$, then $\abs{\nabla u_p} \in L^q(B_R)$ with $q < \frac{Nr(p-1)}{N-r}$.\\
   Moreover if $r=N$,  from \eqref{f**cor} follows
    $$C_N \abs{x} f^{**}(C_N \abs{x}^N) \leq \abs{\abs{f}}_{L^{N, \infty}} < \infty, \qquad \forall x \in B_R,$$ and so
    \begin{equation*}
          \abs{\abs{\nabla u_p }}_{L^{\infty}(B_R)} \leq 1+ \, \Big\lvert \Big\lvert  1 \,+ \, \left( \frac{\abs{x}}{N}f^{**}(C_N \abs{x}^N)\right)^{\frac{1}{p-1}}\Big\rvert \Big\rvert_{L^{\infty}(B_R)} \,\leq c \left(1 +  \, \abs{\abs{f}}_{L^{N, \infty}}^{\frac{1}{p-1}} \right), 
    \end{equation*}
    and so $\nabla u_p \in L^{\infty}(B_R)$.
\end{proof}
\begin{singlespace}
\section{Second order regularity}\label{sec:Regolarity}
\end{singlespace}
Since widely degenerate problems lose the uniqueness of their solutions, we can have many different solutions. However, we can extend $u_p$, defined at \eqref{MIAsolu}, to the set $\{ \abs{\nabla u_p} \leq 1\}$, choosing, as a weak solution to \eqref{1} in the set $\{\abs{\nabla u_p } \leq 1 \}$, the following
      \begin{equation}\label{NuovaSo}
        \tilde{u}_p (x) = \frac{1}{N C^{\frac{1}{N}}_{N}}\, \displaystyle{\int\limits^{C_N R^N}_{C_N \abs{x}^N} } \left( 1 + \left( \frac{s^{ \frac{1}{N}}}{N C^{\frac{1}{N}}_{N}}   f^{**}(s)  \right)^{\frac{1}{p-1}} \right) s^{-1+ \frac{1}{N}} \, ds,
\end{equation}
   in fact, with a simple calculation, we obtain 
    \begin{equation}\label{Dutild}
        \nabla \tilde{u}_p =- \left[\,  1 \,+ \, \left( \frac{\abs{x}}{N}f^{**}(C_N \abs{x}^N)\right)^{\frac{1}{p-1}} \,\right]\, \frac{x}{\abs{x}},
    \end{equation}
    which is exactly \eqref{Dupexplicit} found in Theorem \ref{Teo1}, in the set $\{ \abs{\nabla u_p } > 1 \}$. Then we can choose $\tilde{u}_p=u_p$ as a solution of \eqref{1} a.e. in the whole $B_R \in \mathbb{R}^N$.\\
The aim of this section is to establish some regularity results for the second-order derivatives $\nabla^2 u_p$ of the weak solution $u_p$ of \eqref{1} defined at \eqref{NuovaSo}. 
\begin{proof}[Proof of Theorem \ref{TeoD2}:]
By virtue of equality \eqref{Dutild} we can directly calculate the second order derivatives of $u_p$
\begin{align}
    \nabla^2 u_p &=  - \Bigg[   \frac{1}{p-1} \, \Bigg( \frac{\abs{x}^{1-N}}{NC_N} \int ^{C_N \abs{x}^N} _0 f^{*}(\sigma ) d \sigma \Bigg)^{\frac{2-p}{p-1}} \Bigg(  \frac{(1-N) \abs{x}^{-N}}{NC_N} \frac{x}{\abs{x}} \int ^{C_N \abs{x}^N} _0 f^{*}(\sigma ) d \sigma \, + \notag \\
    \notag \\
    & \quad + \frac{\abs{x}^{1-N}}{NC_N}  f^*(C_N \abs {x}^N)(C_N N \abs{x}^{N-2} x ) \Bigg) \frac{x}{\abs{x}} + \Bigg(\,  1 \,+ \, \left( \frac{\abs{x}^{1-N}}{NC_N} \int ^{C_N \abs{x}^N} _0 f^{*}(\sigma ) d \sigma \right)^{\frac{1}{p-1}} \,\Bigg) \cdot \notag\\
    \notag\\
    & \quad \cdot \Bigg( \frac{I}{\abs{x}} - \frac{x \otimes x}{\abs{x}^3}\Bigg)\Bigg] \,  \notag\\
    \notag\\
    &= \, - \Bigg[   \frac{1}{p-1} \, \Bigg( \frac{\abs{x}}{N} f^{**}(C_N \abs{x}^N ) \Bigg)^{\frac{2-p}{p-1}} \Bigg(  \frac{(1-N)} {N}f^{**}(C_N \abs{x}^N )  \frac{x}{\abs{x}} + f^{*}(C_N \abs{x}^N ) \frac{x}{\abs{x}} \Bigg) \frac{x}{\abs{x}} + \notag \\
    \notag \\
    & \quad +    \Bigg(\,  1 \,+ \, \left(\frac{\abs{x}}{N} f^{**}(C_N \abs{x}^N ) \right)^{\frac{1}{p-1}} \,\Bigg) \Bigg( \frac{I}{\abs{x}} - \frac{x \otimes x}{\abs{x}^3}\Bigg)\Bigg], \notag \label{derivataseconda}
\end{align}
where, in the last line,  we used the definition of the function $f^{**}(t)$ given at \eqref{f**}. \\
Now, recalling the norm's definition $\abs{\abs{f}}_{L^{r, \infty}}$ given at \eqref{NormaLNINf} for $f \in L^{r, \infty}$ with $1<r \leq N$, setting
\begin{equation*}
    M= \abs{\abs{f}}_{L^{r,\infty}(B_R)},
\end{equation*}
by virtue of \eqref{fstar<fstarstar}, we have
\begin{align}\label{stimeperlederivate}
&f^{*}(C_N \abs{x}^N) \leq f^{**}(C_N \abs{x}^N) \leq M (C_N \abs{x}^N)^{- \frac{1}{r}}.
\end{align}
To determine the exponents $q$ such that  $\abs{ \nabla^2 u_p} \in L^q(B_R)$, we can calculate
\begin{align}
  \int _{B_R} \abs{\nabla ^2 u_p}^q dx &\leq \, c
     \Bigg[   \int_{B_R} \Bigg( \abs{x}f^{**}(C_N \abs{x}^N ) \Bigg)^{\frac{2-p}{p-1}q} \Bigg(  f^{**}(C_N \abs{x}^N )  \Bigg)^{q} dx + \notag \\
   \notag  \\
    & \quad +    \int_{B_R} \frac{1}{\abs{x}^q} dx \,+ \, \int_{B_R}\left(\abs{x} f^{**}(C_N \abs{x}^N ) \right)^{\frac{q}{p-1}}\frac{1}{\abs{x}^q} dx \Bigg]  \notag \\
    \notag \\
    & = c  \, \Bigg[   \int_{B_R} \Bigg( \abs{x} ^{\frac{2-p}{p-1}q}\Bigg) \Bigg(  f^{**}(C_N \abs{x}^N )   \Bigg)^{\frac{q}{p-1}} dx +     \int_{B_R} \frac{1}{\abs{x}^q} dx \Bigg] \notag \\
    \notag \\
     & \leq c  \, \Bigg[   \int_{B_R} \Bigg( \abs{x} ^{\frac{2-p}{p-1}q}\Bigg) \Bigg(  M C_N^{-\frac{1}{r}} \abs{x}^{-\frac{N}{r}}   \Bigg)^{\frac{q}{p-1}} dx +     \int_{B_R} \frac{1}{\abs{x}^q}dx \Bigg] \notag \\
     \notag \\
    & \leq c  \, \Bigg[   \int_{B_R} \abs{x}^{(\frac{2-p}{p-1})q - \frac{N}{r}\frac{q}{p-1}} dx  +    \int_{B_R} \frac{1}{\abs{x}^q} dx \Bigg]. \label{dersecondeLq} \notag
    \end{align}

Then the integrability of $\abs{\nabla^2 u_p}^q$ is guaranteed if the exponents $q$ and $r$ satisfy the following conditions
\begin{equation}\label{CondizioniDersec}
    \begin{cases}
      \left( \frac{2-p}{p-1} \right)q - \frac{N}{r}\frac{q}{p-1} +N >0\\
      -q +N > 0.
    \end{cases}
\end{equation}
Since
\begin{equation*}
    \left( \frac{2-p}{p-1} \right) q - \frac{N}{r}\frac{q}{p-1} < -q
\end{equation*}
it suffices that $q$ and $r$ satisfy the first inequality in \eqref{CondizioniDersec}.
This is true provided 
\begin{equation}\label{rmagg}
    r> \frac{Nq}{N(p-1)+ q(2-p)} >1
\end{equation}
or equivalently that
\begin{equation*}
    q< \frac{Nr(p-1)}{N+ r (p-2)}.
\end{equation*}\end{proof}
\noindent In order to compare the properties of the solution to \eqref{1} with previous results involving higher differentiability, we now illustrate the conditions under which the function $ H_{\frac{p}{2}}(\abs{\nabla u_p}) \in W^{1,2}(B_R).$ 
\begin{proof}[Proof of Theorem \ref{TeoHmezzi} :]
    We first observe
\begin{equation*}
    \abs{D(H_{\frac{p}{2}}(\abs{\nabla u_p}))}^2 \, \approx \, (\abs{\nabla u_p} -1)_+^{p-2} \abs{\nabla^2 u_p}^2.
\end{equation*}
And we can effortlessly attain
\begin{equation}\label{Du-1(p-2)}
\begin{aligned}
    (\abs{\nabla u_p} -1)_+^{p-2} &= \left( (\frac{\abs{x}}{N} f^{**}(C_N \abs{x}^N))^{\frac{1}{p-1}} +1 -1 \right)^{p-2}=\\
    &=\left(\frac{\abs{x}}{N} f^{**}(C_N \abs{x}^N) \right)^{\frac{p-2}{p-1}} 
    \end{aligned}
\end{equation}
and
\begin{equation}\label{D2u2}
    \abs{\nabla ^2 u_p}^2 \approx \abs{x}^{2\frac{2-p}{p-1}} \left(  f^{**}(C_N \abs{x}^N)\right)^{\frac{2}{p-1}} + \frac{1}{\abs{x}^2}.
\end{equation}
In the case $p \geq 2$  using \eqref{stimeperlederivate} in combination with \eqref{Du-1(p-2)} and \eqref{D2u2}, we get
\begin{align}
    \int_{B_R} (\abs{\nabla u_p} -1)_+^{p-2} \abs{\nabla^2 u_p}^2 dx &\leq c \, \int_{B_R} \Bigg[\left(\abs{x} f^{**}(C_N \abs{x}^N) \right)^{\frac{p-2}{p-1}} \left( \abs{x}^{2\frac{2-p}{p-1}} \left(  f^{**}(C_N \abs{x}^N)\right)^{\frac{2}{p-1}} + \frac{1}{\abs{x}^2}  \right) dx \notag\\
   \notag \\
   & =  c \int_{B_R} \Bigg[ \left( \abs{x}^{\frac{2-p}{p-1}} \right) \left(  f^{**}(C_N \abs{x}^N)\right)^{\frac{p}{p-1}} + \left( \abs{x}^{\frac{-p}{p-1}} \right) \left(  f^{**}(C_N \abs{x}^N)\right)^{\frac{p-2}{p-1}} \Bigg] dx \notag\\
   \notag \\
     & \leq  c \int_{B_R} \Bigg[ \left( \abs{x}^{\frac{2-p}{p-1}} \right) \left(  \abs{x}^{ -\frac{N}{r}}\right)^{\frac{p}{p-1}} + \left( \abs{x}^{\frac{-p}{p-1}} \right) \left(  \abs{x}^{-\frac{N}{r}})\right)^{\frac{p-2}{p-1}} \Bigg] dx \notag\\
   \notag \\
   & =c \int_{B_R} \Bigg[  \abs{x}^{\frac{2-p}{p-1}- \frac{N}{r}(\frac{p}{p-1})}  + \abs{x}^{\frac{-p}{p-1}- \frac{N}{r}(\frac{p-2}{p-1})}  \Bigg] dx, \notag
    \end{align}
    where we used \eqref{stimeperlederivate}.\\
Therefore to have $ H_{\frac{p}{2}}(\abs{\nabla u_p}) \in W^{1,2}(B_R)$, it suffices to satisfy the following conditions 
\begin{equation}\label{condHp/2}
    \begin{cases}
      \frac{2-p}{p-1}- \frac{N}{r}(\frac{p}{p-1}) +N >0\\
    \frac{-p}{p-1}- \frac{N}{r}(\frac{p-2}{p-1}) +N >0 .
    \end{cases}
\end{equation}
And since
\begin{equation*}
     \frac{-p}{p-1}- \frac{N}{r} \left( \frac{p-2}{p-1} \right) > \frac{2-p}{p-1}- \frac{N}{r} \left( \frac{p}{p-1} \right),
\end{equation*}
it suffices to satisfy only the first condition in \eqref{condHp/2}, i.e.
\begin{equation}\label{rmaggHpmezzi}
r > \frac{Np}{N(p-1) +2-p}.
\end{equation}
This concludes the proof.
 \end{proof}
\noindent Subsequently we can observe that if $q < \frac{Np}{N+p-2}$ then
 \begin{equation*}
    \frac{Np}{N(p-1)+ (2-p)} >  \frac{Nq}{N(p-1)+ q(2-p)}
 \end{equation*}
 and so, the exponents $r$ satisfying \eqref{rmaggHpmezzi} automatically satisfy also \eqref{rmagg}. Then, for $p \geq 2$, we acquire that if
 $$  \,f \in L^{r, \infty}(B_R) \, \text{  with } \, r>\frac{Np}{N(p-1)+ (2-p)}$$ then 
 $$H_{\frac{p}{2}}(\nabla u_p) \in W^{1,2}(B_R)$$ and $$\nabla u_p \in W^{1,q}(B_R), \quad \forall  \,  q < \frac{Np}{N+p-2}.$$
 \\
 \noindent In particular $ \nabla u_p \in W^{1,2}(B_R)$ if $$\frac{Np}{N+p-2}>2,$$
 i.e. $p>2$ and $N>2$.
 \begin{singlespace}
\section{Some stability estimate as $p \to 1$}\label{sec:Stability}
\end{singlespace}
As a consequence of Theorem \ref{Teo1}, we can analyze the behaviour of the family $(\nabla u_p)_p$ as $p \to 1$. We start with
the following
\begin{prop}\label{prop4.1}
    Let $u_p$ be a solution to \eqref{1}, then denoting by
    \begin{equation*}\label{campoz}
        z = - \left( \frac{f^{**}(C_N \abs{x}^N)}{N}\right) \, x,
    \end{equation*}
    we have
    \begin{equation*}\label{z}
         \left(  \abs{\nabla u_{p} }-1 \right)_{+}^{p-1} \, \displaystyle{\frac{\nabla u_{p}}{\abs{\nabla u_{p}}}} \, = \, z,
    \end{equation*}
   for every $p>1$.
\end{prop}
\begin{proof}
For the solution $u_p$ of \eqref{1} at \eqref{NuovaSo}, a direct calculation shows that
  
    \begin{align}
         \left(  \abs{\nabla u_{p} }-1 \right)_{+}^{p-1} \, \displaystyle{\frac{\nabla u_{p}}{\abs{\nabla u_{p}}}}&= -\left[\, \left( \frac{\abs{x}}{N}f^{**}(C_N \abs{x}^N)\right)^{\frac{1}{p-1}}\,\right]^{p-1}\, \frac{x}{\abs{x}}  \notag \\
        \notag \\
         &= -\left( \frac{\abs{x}}{N}f^{**}(C_N \abs{x}^N)\right) \,\frac{x}{\abs{x}} \notag \\
        \notag \\
         &=  - \left( \frac{f^{**}(C_N \abs{x}^N)}{N}\right) \, x \,= \, z. \notag
         \end{align}
It is worth noting that
  \begin{equation*}
    -  \mathrm{div} \left(  ( \abs{\nabla u_p} -1 )_{+}^{p-1}\, \frac{\nabla u_p}{\abs{\nabla u_p}}   \right) = f,
  \end{equation*}
i.e. $$-\mathrm{div}(z)=f.$$
\end{proof}

\noindent We can observe that these results extend to the case of widely degenerate \textit{p}-Laplace type equations those in \cite[Proposition 3.1]{Merc}; therefore, we can obtain the equivalent theorems as in \cite[Paragraph 3]{Merc}. In particular, we can acquire Theorem \ref{TeoMerc}.\\

\section{Example}\label{sec:Example}
In this section, using the example given in \cite[5.An Example]{BraEsempio}  we show that Theorems \ref{TeoD2} and \ref{TeoHmezzi} and Corollary \ref{Cor1} are sharp.\\
For $\beta>1$, let us consider 
$$f(x)= \abs{x}^{- \beta}$$
and since
$$f^{**}(C_N \abs{x}^N) \approx \abs{x}^{- \beta}$$
then we can compute
$$(\abs{\nabla u} -1)_+^{p-2} \abs{\nabla^2 u}^2 \approx  \abs{x}^{\frac{2-p}{p-1} -\frac{ \beta p}{p-1}} +  \abs{x}^{\frac{-p}{p-1}+\frac{- \beta(p-2)}{p-1}} \approx \abs{x}^{\frac{2-p- \beta p}{p-1} }+  \abs{x}^{\frac{2 \beta - \beta p -p}{p-1}} \approx \abs{x}^{\frac{2-p- \beta p}{p-1} } $$
In order to have $(\abs{\nabla u} -1)_+^{p-2} \abs{\nabla^2 u}^2 \in L^1(B_R)$, the following condition must holds
\begin{align}
& \frac{2-p- \beta p}{p-1} + N >0 \qquad \text{   that is  } \notag \\
    &\beta < N-1 - \frac{N-2}{p} \quad \text{   and then } \notag \\
    &\frac{1}{\beta} > \frac{p}{Np- N +2 - p} \notag\\
    &\frac{N}{\beta} > \frac{Np}{N(p-1) +2 - p} \label{Beta}
\end{align}
 
Therefore
$$f \in L^{\frac{N}{\beta}, \infty}(B_R) \Rightarrow (\abs{\nabla u} -1)_+^{p-2} \abs{\nabla^2 u}^2 \in L^{1}(B_R)$$
for every $\beta >0$ such that $ \frac{N}{\beta} > \frac{Np}{N(p-1) +2 - p}, \quad \forall p> 1. $
Obviously for ${\hat{\beta}} >\beta$, since $$\frac{N}{\hat{\beta}} < \frac{N}{\beta},$$ \eqref{Beta} cannot be satisfied
and so
$$f \in L^{\frac{N}{\hat{\beta}}, \infty}(B_R) \text{   but   } (\abs{\nabla u} -1)_+^{p-2} \abs{\nabla^2 u}^2 \notin L^{1}(B_R)$$\\
Hence Theorem \ref{TeoD2} is sharp.\\
Now we calculate 
\begin{equation*}
  \abs{ \nabla^2 u}^q \approx     \abs{x} ^{\frac{2-p}{p-1}q +{\frac{- \beta q}{p-1}}} + \abs{x}^{-q} \approx \abs{x} ^{\frac{2-p}{p-1}q+ \frac{- \beta q}{p-1}}
\end{equation*}
To order to have $\abs{ \nabla^2 u}^q \in L^q(B_R)$, the following must be true
\begin{align}
   & \frac{q(2-p- \beta)}{p-1} +N >0,  \quad \text{   that is  } \notag \\
    &\beta < \frac{N(p-1)+ q (2-p)}{q}, \quad \text{    then} \notag \\
    &\frac{N}{\beta} > \frac{Nq}{N(p-1)+ q (2-p)} \notag
\end{align}
that is, Theorem \ref{TeoD2} is sharp.\\
At this point we can deduce
\begin{equation*}
  \abs{ \nabla u} \approx     \abs{x} ^{\frac{1-\beta}{p-1}}.
\end{equation*}
To order to have $\abs{ \nabla u} \in L^{\infty}(B_R)$, we have to have $\beta\leq 1$,  confirming the sharpness of the Corollary \ref{Cor1}.\\
Now, in order to compare our results with \cite[Theorem 1.1] {BraEsempio}
let us choose as in \cite{BraEsempio}
$$\beta = (\alpha +1)(p-1)+1$$
then \eqref{Beta} becomes
\begin{align}
    &(\alpha +1)(p-1)+1 < \frac{2-p}{p} + N - \frac{N}{p} \notag \\
    & (\alpha +1)(p-1) < 2(\frac{1-p}{p}) + N (\frac{p-1}{p}) \notag \\
    & \alpha + 1 < - \frac{2}{p} + \frac{N}{p} \notag \\
    & \alpha < \frac{N-2}{p} - 1 = \hat{\alpha}\notag
\end{align}
We'd like to explicitly mention that this is the bound found in \cite{BraEsempio}. Here, it's a summability threshold for the datum, while in \cite{BraEsempio} it was a threshold for its fractional differentiability.
\vspace{20mm}
\textbf{Acknowledgements}\\
The author is member of GNAMPA (Gruppo Nazionale per l'Analisi Matematica, la Probabilità e le loro Applicazioni) of INdAM (Istituto Nazionale di Alta Matematica)

\begin{singlespace}
\lyxaddress{\noindent \textbf{$\quad$}\\
$\hspace*{1em}$\textbf{Stefania Russo}\\
Dipartimento di Matematica e Applicazioni ``R. Caccioppoli''\\
Università degli Studi di Napoli ``Federico II''\\
Via Cintia, 80126 Napoli, Italy.\\
\textit{E-mail address}: stefania.russo3@unina.it}
\end{singlespace}

\end{document}